\documentclass{amsart}
\usepackage{amssymb,latexsym,enumerate,verbatim,url}
\usepackage[l2tabu,orthodox]{nag}
\usepackage{tikz}
\newtheorem{Thm}[equation]{Theorem}
\newtheorem{Prop}[equation]{Proposition}
\newtheorem{Cor}[equation]{Corollary}
\newtheorem{Lem}[equation]{Lemma}
\newtheorem{Rem}[equation]{Remark}
\newtheorem{Conj}[equation]{Conjecture}
\numberwithin{equation}{section}
\DeclareMathOperator{\Ad}{Ad}
\DeclareMathOperator{\Cr}{C_r^*}
\begin{document}

\title[Linear dependency and square integrable representations]{Linear Dependency of translations and square integrable representations}

\author[P.A. Linnell]{Peter A. Linnell}
\address{Department of Mathematics \\
Virginia Tech \\
Blacksburg, VA. 24061-0123 \\
USA}
\email{plinnell@math.vt.edu}

\author[M. J. Puls]{Michael J. Puls}
\address{Department of Mathematics \\
John Jay College-CUNY \\
524 West 59th Street \\
New York, NY 10019 \\
USA}
\email{mpuls@jjay.cuny.edu}

\author[A. Roman]{Ahmed Roman}
\address{Department of Physics\\
Emory University\\
Atlanta, GA 30322\\
USA}
\email{ahmed.hemdan.roman@emory.edu}


\begin{abstract}
Let $G$ be a locally compact group. We examine the problem of determining when nonzero functions in $L^2(G)$ have linearly independent left translations. In particular, we establish some results for the case when $G$ has an irreducible, square integrable, unitary representation. We apply these results to the special cases of the affine group, the shearlet group and the Weyl-Heisenberg group. We also investigate the case when $G$ has an abelian, closed subgroup of finite index.
\end{abstract}

\keywords{affine group, Atiyah conjecture, left translations, linear independence, shearlet group, square integrable representation, Weyl-Heisenberg group}
\subjclass[2010]{Primary: 43A80; Secondary: 42C99, 43A65}

\date{Fri Jan 27 15:57:46 EST 2017}
\maketitle

\section{Introduction}\label{Introduction}
Let $G$ be a locally compact Hausdorff group with left invariant Haar
measure $\mu$. Denote by $L^p(G)$ the set of complex-valued functions
on $G$ that are $p$-integrable with respect to $\mu$, where $1 < p
\in \mathbb{R}$. As usual, identify functions in $L^p(G)$ that differ
only on a set of $\mu$-measure zero. We shall write $\Vert \cdot
\Vert_p$ to indicate the usual $L^p$-norm on $L^p(G)$. The {\em
regular representation} of $G$ on $L^p(G)$ is given by $L(g) f(x) =
f(g^{-1}x)$, where $g, x \in G$ and $f \in L^p(G)$. The function
$L(g) f$ is known as the left translation of $f$ by $g$ (many papers
use the word ``translate" instead of ``translation").  In
\cite{Rosenblatt08} Rosenblatt investigated the problem of
determining when the left translations of a nonzero function $f$ in
$L^2(G)$ are linearly independent. In other words, when can there be
a nonzero function $f \in L^2(G)$, some nonzero complex constants
$c_k$, and distinct elements $g_k \in G$, where $1 \leq k \leq n, k
\in \mathbb{N}$ (the positive integers)
such that
\begin{equation} \label{eq:leftdepequ} 
\sum_{k=1}^n c_k L(g_k) f = 0?
\end{equation}
It was shown in the introduction of \cite{Rosenblatt08} that if $G$
has a nontrivial element of finite order, then there is a nonzero
element in $L^2(G)$ that has a linear dependency among its left
translations. Thus, when trying to find nontrivial functions that
satisfy \eqref{eq:leftdepequ} it is more interesting to consider
groups for which all nonidentity elements have infinite order. In the
case of $G= \mathbb{R}^n$ it is known that every nonzero function in
$L^2(\mathbb{R}^n)$ has no linear dependency among its left
translations. Rosenblatt attacked \eqref{eq:leftdepequ} by trying  to determine if there is a relationship between the linear independence of the translations of functions in $L^2(G)$ and the linear independence of an element and its images under the action of $G$ in an irreducible representation of $G$. In order to gain insights into possible connections between these concepts, he computed examples for specific groups. The particular groups that he studied in \cite{Rosenblatt08} were the Heisenberg group and the affine group. What made these groups appealing is that they have irreducible representations that are intimately related to a time-frequency equation. Recall that an equation of the form
\begin{equation} \label{eq:timefreqequ}
\sum_{k=1}^n c_k \text{exp}(ib_k h(t)) f(a_k +t) =0,
\end{equation}
is a time-frequency equation, where $a_k, b_k \in \mathbb{R}, f \in L^2(\mathbb{R})$ and $h \colon \mathbb{R} \rightarrow \mathbb{R}$ is a nontrivial function. The case $h(t) = t$ corresponds to the Heisenberg group and $h(t) = e^t$ corresponds to the affine group. 

Now suppose that $G$ is a group that has an irreducible representation related to \eqref{eq:timefreqequ}. Rosenblatt wondered if there existed a nontrivial $f \in
L^2(\mathbb{R})$ that satisfied equation \eqref{eq:timefreqequ}, then this $f$ could be used to produce a nonzero $F \in L^2(G)$ with a linear dependency among its left translations. He then showed  \cite[Proposition 3.1]{Rosenblatt08} that there exists a nonzero $f \in L^2(\mathbb{R})$ that satisfies the following time-frequency equation
\begin{equation} \label{eq:timefreqaffine}
Cf(t) = f(t - \log 2) + \text{exp}(-\frac{i}{2} e^t)f(t - \log 2),
\end{equation}
where $C$ is a constant. This time-frequency equation corresponds to
the affine group $A$ case since $h(t) = e^t$. This offers some hope
that there might be a nonzero function in $L^2(A)$ that has a linear
dependency among its left translations. However, there is no clear
principle that can be used to show the existence of such a function
given a nontrivial $f$ that satisfies \eqref{eq:timefreqaffine}.
Using the proof of the existence of $f$ that satisfies \eqref{eq:timefreqaffine} as a guide, a nonzero $F$ in $L^2(A)$ with linearly dependent left translations was shown to exist \cite[Proposition 3.2]{Rosenblatt08}.

Even less is known about the Heisenberg group $H_n, n \in \mathbb{N}$. The relevant time-frequency equation, which has been intensely studied in the context of Gabor analysis,  is
\begin{equation} \label{eq:timefreqheisen}
\sum_{k=1}^m c_k e^{2\pi i b_k \cdot t} f( t + a_k) = 0,
\end{equation}
where $c_k$ are nonzero constants, $a_k, b_k \in \mathbb{R}^n$, and  $f \in L^2(\mathbb{R}^n)$. Linnell showed that $f=0$ is the only solution to \eqref{eq:timefreqheisen} when the subgroup generated by $(a_k, b_k)$, where $k =1, \dots, m$, is discrete. This gave a partial answer to a conjecture posed by Heil, Ramanathan and Topiwala on page 2790 of
\cite{HeilRamaTop96} that $f=0$ is the only solution to
\eqref{eq:timefreqheisen} when $n =1$. As far as we know the conjecture is still open. 

The motivation for this paper is to give a clearer picture of the link between the linear independence of an element and its images under the action of $G$ in an irreducible representation of $G$ and the linear independence of the left translations of a function in $L^2(G)$. In Section \ref{Proofofconnectingprop} we will prove the following:
\begin{Prop} \label{connectingprop}
Let $G$ be a locally compact group and suppose $\pi$ is an irreducible, unitary, square integrable representation of $G$ on a Hilbert space $\mathcal{H}_{\pi}$. If there exists a nonzero $v$ in $\mathcal{H}_{\pi}$ such that
\[ \sum_{k=1}^n c_k \pi(g_k) v = 0 \]
for some nonzero constants $c_k \in \mathbb{C}$ and $g_k \in G$, then there exists a nonzero $F \in L^2(G)$ that satisfies
\[ \sum_{k=1}^n c_k L(g_k) F = 0. \]
In particular if there exists a nonzero $v$ in $\mathcal{H}_{\pi}$ with linearly dependent translations, then there exists a nonzero $F$ in $L^2(G)$ with linearly dependent translations.
\end{Prop}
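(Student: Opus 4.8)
The plan is to construct $F$ directly from $v$ as a matrix coefficient, exploiting the fact that square integrability furnishes an injective intertwiner from $\mathcal{H}_\pi$ into $L^2(G)$, along which the given linear relation is simply transported. Concretely, because $\pi$ is irreducible and square integrable, the orthogonality relations of Duflo and Moore guarantee a nonzero \emph{admissible} vector $w \in \mathcal{H}_\pi$, namely one for which the voice (wavelet) transform
\[
V_w \colon \mathcal{H}_\pi \to L^2(G), \qquad (V_w u)(g) = \langle u, \pi(g) w \rangle,
\]
is well defined into $L^2(G)$ and satisfies $\Vert V_w u \Vert_2 = \Vert C w \Vert \, \Vert u \Vert$ for the associated positive (formal degree) operator $C$. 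The first step is to record this map and the identity $\Vert V_w u \Vert_2 = \Vert Cw\Vert \, \Vert u \Vert$, which shows that $V_w$ is a nonzero scalar multiple of an isometry and is therefore injective.

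The key computation is that $V_w$ intertwines $\pi$ with the left regular representation $L$. Using the unitarity of $\pi$, for all $h, g \in G$ one has
\[
(V_w \pi(h) u)(g) = \langle \pi(h) u, \pi(g) w \rangle = \langle u, \pi(h^{-1} g) w \rangle = (V_w u)(h^{-1} g) = (L(h) V_w u)(g),
\]
so that $V_w \pi(h) = L(h) V_w$ for every $h \in G$. I would then set $F := V_w v$. Since $V_w$ is injective and $v \neq 0$, we get $F \neq 0$; this is precisely the point at which square integrability enters in an essential way. Applying the intertwining identity termwise and using linearity of $V_w$,
\[
\sum_{k=1}^n c_k L(g_k) F = \sum_{k=1}^n c_k V_w \pi(g_k) v = V_w \Bigl( \sum_{k=1}^n c_k \pi(g_k) v \Bigr) = V_w(0) = 0,
\]
which is the desired dependency among the left translations of $F$. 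The final sentence of the proposition is then just the restatement of this conclusion in the language of translations (the $g_k$, and hence the translates $L(g_k)F$, are inherited unchanged), so no further argument is needed.

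The main obstacle, and the only place where genuine content is used, is establishing that a nonzero admissible $w$ exists and that $V_w$ really lands in $L^2(G)$ as a scaled isometry; everything after that is the formal transport of the relation through the injective map $V_w$. I would handle this by invoking the orthogonality relations for square integrable representations rather than reproving them. The one subtlety worth flagging is that the relevant groups (for instance the affine group) are non-unimodular, so $C$ is an unbounded operator and only a dense set of vectors is admissible; it is for this reason that I would appeal to the full Duflo--Moore theorem, which still supplies the nonzero admissible $w$ and the injectivity of $V_w$ needed to conclude $F \neq 0$.
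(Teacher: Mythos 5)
Your proof is correct and is essentially the paper's own argument: the function $F = V_w v$ is exactly the matrix coefficient $F_{v,w}(x) = \langle v, \pi(x)w\rangle$ that the paper uses, with nonvanishing guaranteed by the same orthogonality relation $\Vert V_w v\Vert_2 = \Vert Cw\Vert\,\Vert v\Vert$, and your intertwining identity $V_w\pi(h) = L(h)V_w$ is just the paper's inline computation packaged as a lemma. No substantive difference.
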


In Section \ref{affinegroup} we will use Proposition \ref{connectingprop} to construct explicit examples of nontrivial functions in $L^2(A)$, where $A$ is the affine group, that have a linear dependency among their left translations. 

In Section \ref{discretegroupzerodiv} we investigate the case where $G$ is a discrete group. We will see that the problem of determining if the left translations of a nonzero function in $\ell^2(G)$ forms a linearly independent set is related to the strong Atiyah conjecture. 
We shall briefly review the strong Atiyah conjecture in Section
\ref{discretegroupzerodiv}.

After considering the discrete group case in Section \ref{discretegroupzerodiv}, we shall return to studying the linear dependency problem for groups that satisfy our original hypotheses. Let $K$ be a subgroup of a group $G$. If $k \in K, x \in G$, and $f \in L^2(G)$, then we shall say that 
\[ L(k)f(x) = f(k^{-1}x) \]
is a {\em left $K$-translation} of $f$. In Section \ref{proofofdiscretesubthm} we shall prove

\begin{Thm} \label{discretesubgroup}
Let $G$ be a locally compact, $\sigma$-compact group and let $K$ be a
torsion-free discrete subgroup of $G$. If $K$ satisfies the strong Atiyah conjecture,
then each nonzero function in $L^2(G)$ has linearly independent $K$-translations.
\end{Thm}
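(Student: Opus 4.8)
The plan is to reduce the statement about a locally compact group $G$ to a statement about the discrete subgroup $K$, where we can bring the strong Atiyah conjecture to bear. Suppose for contradiction that some nonzero $F \in L^2(G)$ satisfies $\sum_{k=1}^n c_k L(k_k) F = 0$ with distinct $k_1,\dots,k_n \in K$ and nonzero constants $c_k$. The key structural idea is to disintegrate $L^2(G)$ over the coset space $K \backslash G$ (or $G/K$, depending on side conventions). Because $K$ is discrete, a measurable transversal for the right $K$-action exists, and using $\sigma$-compactness one can choose a Borel fundamental domain $D$ so that almost every $G$-orbit under left multiplication by $K$ meets $D$ in exactly one point. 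This identifies $L^2(G)$ isometrically with a direct integral $\int_{D}^{\oplus} \ell^2(K)\, d\nu$, under which the left $K$-translation operators $L(k)$ act fiberwise as the left regular representation of $K$ on $\ell^2(K)$.

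The second step is to transport the dependency relation into this direct-integral picture. Under the identification above, $F$ corresponds to a measurable field $x \mapsto F_x \in \ell^2(K)$ with $\int_D \Vert F_x \Vert^2\, d\nu(x) = \Vert F \Vert_2^2 < \infty$, and the relation $\sum_k c_k L(k_k) F = 0$ becomes, for almost every $x \in D$, the fiberwise equation $\sum_{k=1}^n c_k \lambda(k_k) F_x = 0$, where $\lambda$ denotes the left regular representation of $K$ on $\ell^2(K)$. Since $F \neq 0$, there is a positive-measure set of $x$ for which $F_x \neq 0$, so it suffices to derive a contradiction from the existence of a single nonzero $u \in \ell^2(K)$ satisfying $\sum_{k=1}^n c_k \lambda(k_k) u = 0$.

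The final step is the purely discrete-group core of the argument. The element $a = \sum_{k=1}^n c_k k_k$ lies in the group algebra $\mathbb{C}[K]$, and the relation says that the corresponding convolution operator $\lambda(a)$ on $\ell^2(K)$ has the nonzero vector $u$ in its kernel, i.e. $\lambda(a)$ is not injective. Here I would invoke the strong Atiyah conjecture for the torsion-free group $K$: it forces the von Neumann dimension of $\ker \lambda(a)$ to be an integer, and combined with torsion-freeness this yields the Linnell-type zero-divisor conclusion that a nonzero element of $\mathbb{C}[K]$ cannot be a zero divisor in the group von Neumann algebra, so $\lambda(a)$ is injective. Since $a \neq 0$ (the $c_k$ are nonzero and the $k_k$ distinct), this contradicts $\lambda(a)u = 0$, and the theorem follows. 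The main obstacle I anticipate is the measure-theoretic bookkeeping in the first step: constructing a genuine Borel fundamental domain and verifying that the direct-integral decomposition intertwines $L(k)$ with the fiberwise regular representation requires care with the $\sigma$-compactness hypothesis and with the (possibly non-unimodular) Haar measure, and one must ensure the almost-everywhere fiberwise relation is legitimately extracted from the global $L^2$ identity.
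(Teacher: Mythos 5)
Your proof is correct. Its first half --- a Borel fundamental domain $D$ for the left $K$-action on $G$ (which exists by $\sigma$-compactness) and the resulting identification of $L^2(G)$ with copies of $\ell^2(K)$ indexed by $D$, under which $L(k)$ is intertwined with the left regular representation $\lambda(k)$ --- is essentially the paper's setup, except that the paper decomposes the multiplicity space $L^2(D)$ into a Hilbert direct sum $\bigoplus_i \overline{S}_i$ via a Hilbert basis rather than keeping a direct integral. Where you genuinely diverge is the transfer step. You read the relation fiberwise: since $\bigl\Vert \sum_k c_k L(g_k)F \bigr\Vert_2^2 = \int_D \bigl\Vert \sum_k c_k \lambda(g_k)F_x \bigr\Vert^2 \, d\nu(x)$, the fiber relation holds for almost every $x$, and any fiber with $F_x \neq 0$ hands you a nonzero vector in $\ker\lambda(a) \subseteq \ell^2(K)$ with $a = \sum_k c_k g_k \in \mathbb{C}K$, to which the zero-divisor consequence of the strong Atiyah conjecture for torsion-free groups applies. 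The paper instead constructs a $\ast$-isomorphism $\alpha \colon \mathcal{W}(K) \rightarrow \mathcal{N}(K)$ between the von Neumann algebras generated by $\mathbb{C}K$ inside $\mathcal{B}(L^2(G))$ and inside $\mathcal{B}(\ell^2(K))$ (using an operator-norm comparison and the Kaplansky density theorem), and transfers the nontriviality of the kernel projection of $\theta = \sum_k c_k L(g_k)$ through $\alpha$ to produce a nonzero kernel vector in $\ell^2(K)$ (Proposition \ref{Pzerodivisors}). Your route is more elementary --- no Kaplansky density, no double commutant, no operator-norm lemma --- and exhibits the kernel vector concretely as a fiber of $F$; what the paper's heavier machinery buys is the isomorphism $\mathcal{W}(K)\cong\mathcal{N}(K)$ itself, a structural result of independent interest that it also records separately. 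Two of the obstacles you flag are not real ones: non-unimodularity is irrelevant because only left-invariance of the left Haar measure is needed to see that $x \mapsto kx$ carries the measure on $D$ to that on $kD$, and the almost-everywhere extraction is immediate from Tonelli once $G$ is identified with $K \times D$ as a Borel measure space.
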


In Section \ref{WeylHeisenberggroup} we shall study the Weyl-Heisenberg group $\tilde{H}_n$, a variant of the Heisenberg group, $H_n$. The group $\tilde{H}_n$ is of interest to us because it has an irreducible unitary representation on $L^2(\mathbb{R}^n)$, the Schr\"{o}dinger representation, which is square integrable. Furthermore, the time-frequency equation
\eqref{eq:timefreqheisen} is related to the Schr\"{o}dinger
representation. Now if $K$ is a torsion-free discrete subgroup of
$\tilde{H}_n$, then by Theorem \ref{discretesubgroup} every nonzero
element in $L^2(\tilde{H}_n)$ has linearly independent left
$K$-translations. It will then follow from Proposition
\ref{connectingprop} that if the subgroup of $\mathbb{R}^{2n}$
generated by $(a_k, b_k), 1 \leq k \leq m$, is discrete and  
the product $ a_h\cdot b_k \in \mathbb{Q}$ for all $h,k$, then $f = 0$
is the only solution to \eqref{eq:timefreqheisen}, see Proposition \ref{timefreq}. 
This gives a new proof of a special case of \cite[Proposition 1.3]{Linnell98} and 
sheds new insights on the problem.

In Section \ref{shearletgroups} we consider the problem of determining the linear independence of the left translations of a function in $L^2(S)$, where $S$ is the shearlet group. By using Proposition \ref{connectingprop} we will see that this problem is related to the question of determining the linear independence of a shearlet system of a function in $L^2(\mathbb{R}^2)$, which was recently studied in \cite{MaPetersen}.

In the last section of the paper we investigate the linear
independence of left translations of functions in $L^p(G)$ for
virtually abelian groups $G$ with no nontrivial compact subgroups. In particular, we generalize \cite[Theorem 1.2]{EdgarRosenblatt79}.

\section{Proof of Proposition \ref{connectingprop}} \label{Proofofconnectingprop}
In this section we will prove Proposition \ref{connectingprop}. Before we give our proof we will give some necessary definitions. A {\em unitary representation} of $G$ is a homomorphism $\pi$ from $G$ into the group $U(\mathcal{H}_{\pi})$ of unitary operators on a nonzero Hilbert space $\mathcal{H}_{\pi}$ that is continuous with respect to the strong operator topology. This means that $\pi \colon G \rightarrow U(\mathcal{H}_{\pi})$ satisfies $\pi(xy) = \pi(x) \pi(y), \pi(x^{-1}) = \pi(x)^{-1} = \pi(x)^{\ast}$, and $x \rightarrow \pi(x)u$ is continuous from $G$ to $\mathcal{H}_{\pi}$ for each $u \in \mathcal{H}_{\pi}$. A closed subspace $W$ of $\mathcal{H}_{\pi}$ is said to be {\em invariant} if $\pi(x) W \subseteq W$ for all $x \in G$. If the only invariant subspaces of $\mathcal{H}_{\pi}$ are $\mathcal{H}_{\pi}$ and $0$, then $\pi$ is said to be an {\em irreducible representation} of $G$. A representation that is not irreducible is defined to be a {\em reducible representation}. If $\pi_1$ and $\pi_2$ are unitary representations of $G$, an {\em intertwining operator} for $\pi_1$ and $\pi_2$ is a bounded liner map $T\colon \mathcal{H}_{\pi_1} \rightarrow \mathcal{H}_{\pi_2}$ that satisfies $T\pi_1(g) = \pi_2(g)T$ for all $g \in G$. We will assume throughout this paper that the inner product $\langle \cdot, \cdot \rangle$ on $\mathcal{H}_{\pi}$ is conjugate linear in the second component. If $u,v \in \mathcal{H}_{\pi}$, a {\em matrix coefficient} of $\pi$ is the function $F_{v, u} \colon G \rightarrow \mathbb{C}$ defined by 
\[ F_{v, u}(x) = \langle v, \pi(x)u \rangle. \]
We will indicate the $F_{u, u}$ case by $F_u$. An irreducible representation $\pi$ is said to be {\em square integrable} if there exists a nonzero $u \in \mathcal{H}_{\pi}$ such that $F_u \in L^2(G)$. We shall say that $u \in \mathcal{H}_{\pi}$ is {\em admissible} if $F_u \in L^2(G)$. The set of admissible elements in $\mathcal{H}_{\pi}$ will be denoted by $\Ad(\mathcal{H}_{\pi})$. A consequence of $\pi$ being irreducible is that if there is a nonzero admissible element in $\mathcal{H}_{\pi}$, then $\Ad(\mathcal{H}_{\pi})$ is dense in $\mathcal{H}_{\pi}$. In fact, $\Ad(\mathcal{H}_{\pi}) = \mathcal{H}_{\pi}$ if $G$ is unimodular, in addition to $\Ad(\mathcal{H}_{\pi})$ containing a nonzero element. By \cite[Theorem 3.1]{GrossmanMorletPaul85} there exists a self adjoint positive operator $C \colon \Ad(\mathcal{H}_{\pi}) \rightarrow \mathcal{H}_{\pi}$ such that if $u \in \Ad(\mathcal{H}_{\pi})$ and $v \in \mathcal{H}_{\pi}$, then
\begin{align*}
 \int_G \vert \langle v, \pi(x)u \rangle \vert^2\, d\mu & = \int_G \langle v, \pi(x) u \rangle \langle \overline{ v, \pi(x)u } \rangle \, d\mu \\
                                                                                     & = \Vert Cu \Vert^2 \Vert v \Vert^2,
\end{align*}
where $\Vert \cdot \Vert$ denotes the $\mathcal{H}_{\pi}$-norm. Thus if $u \in \Ad(\mathcal{H}_{\pi})$, then $F_{v, u} \in L^2(G)$ for all $v \in \mathcal{H}_{\pi}$.

We now prove Proposition \ref{connectingprop}. Suppose there exists a nonzero $v \in \mathcal{H}_{\pi}$ for which there exists a linear dependency among some of the elements $\pi(g)v$, where $g \in G$. So there exists nonzero constants $c_1, c_2, \dots, c_n$ and elements $g_1, g_2, \dots, g_n$ from $G$ with $\pi(g_j) \neq \pi(g_k)$ if $j \neq k$ such that 
\begin{equation}\label{eq:hypprop}
\sum_{k=1}^n c_k \pi(g_k) v = 0. 
\end{equation}
Let $u \in \Ad(\mathcal{H}_{\pi})$. Then $0 \neq F_{v,u} \in L^2(G)$.
Since $\pi$ is unitary,
$\langle \pi (g)v, \pi (x) u\rangle = \langle v, \pi (g^{-1}x)u\rangle$ for all $x$ and $g$ in $G$; in other words the continuous linear map
$v \mapsto F_{v, u} \colon \mathcal{H} \to L^2(G)$
intertwines $\pi$ with the regular representation $L$. Combining this observation with our hypothesis (\ref{eq:hypprop}) yields for all $x \in G$,
\[ \sum_{k=1}^n c_k L(g_k) F_{v,u}(x) =0, \] 
that is $F_{v, u}$ has linearly dependent left translations. The proof of Proposition \ref{connectingprop} is now complete.

\section{The Affine Group}\label{affinegroup}
In this section we give examples of nonzero functions in $L^2(G)$, where $G$ is the affine group, that have a linear dependency among some of its left translations. Let $\mathbb{R}$ denote the real numbers and let $\mathbb{R}^{\ast}$ be the set $\mathbb{R} \setminus \{ 0\}$. Recall that $\mathbb{R}$ is a group under addition and $\mathbb{R}^{\ast}$ is a group with respect to multiplication. The affine group, also known as the $ax + b$ group, is defined to be the semidirect product of $\mathbb{R}^{\ast}$ and $\mathbb{R}$. That is,
\[ G = \mathbb{R}^{\ast} \rtimes \mathbb{R}. \]
Let $(a, b)$ and $(c, d)$ be elements of $G$. The group operation on $G$ is given by $(a, b)(c, d) = (ac, b+ ad)$. The identity element of $G$ is $(1, 0)$ and $(a, b)^{-1} = (a^{-1}, -a^{-1}b)$. The left Haar measure on $G$ is $d\mu = \frac{dadb}{a^2}$ and the right Haar measure is $d\mu = \frac{dadb}{\vert a \vert}$. Thus $G$ is a nonunimodular group because the right and left Haar measures do not agree. So $f \in L^2(G)$ if and only if 
\[ \int_{\mathbb{R}} \int_{\mathbb{R}^{\ast}} \vert f(a, b) \vert^2 \frac{da db}{a^2} < \infty. \]
An irreducible unitary representation of $G$ can be defined on $L^2(\mathbb{R})$ by 
\[ \pi(a,b) f(x) = \vert a \vert^{-1/2} f\left(\frac{x-b}{a}\right), \]
where $(a, b) \in G$ and $f \in L^2(\mathbb{R})$. Before we show that $\pi$ is square integrable we recall some facts from Fourier analysis.

Let $f \in L^1(\mathbb{R}) \cap L^2(\mathbb{R})$, the Fourier transform of $f$ is defined to be 
\[ \hat{f}(\xi) = \int_{\mathbb{R}} f(x) e^{-2\pi i \xi x}\, dx, \]
where $\xi \in \mathbb{R}$. The Fourier transform can be extended to a unitary operator on $L^2(\mathbb{R})$. For $y \in \mathbb{R}$ we also have the following unitary operators on $L^2(\mathbb{R})$,
\[ T_y f(x) = f(x-y), E_yf(x) = e^{2\pi iyx} f(x) \]
\[ D_y f(x) = \vert y \vert^{-1/2} f\left(\frac{x}{y}\right), (y \neq 0). \]
Given $f, g \in L^2(\mathbb{R})$ the following relations are also true $\langle f, T_y g \rangle = \langle T_{-y}f, g \rangle, \langle f, E_y g\rangle = \langle E_{-y} f, g \rangle$ and $\langle f, D_y g \rangle = \langle D_{y^{-1}} f, g \rangle$. Furthermore, $\widehat{T_y f} = E_{-y} \widehat{f}$ and $\widehat{D_y f} = D_{y^{-1}} \widehat{f}$. Observe that for $(a, b) \in G$ and $f \in L^2(\mathbb{R})$,
\[ \pi(a, b) f(x) = T_b D_a f(x) = \vert a \vert^{-1/2} f \left( \frac{x-b}{a} \right). \]
Using the above relations it can be shown that for $f \in L^2(\mathbb{R})$,
\[ \int_G \langle f, \pi(a, b) f \rangle \, d\mu = \int_{\mathbb{R}} \int_{\mathbb{R}^{\ast}} \vert \langle f, T_b D_a f \rangle \vert^2 \frac{dadb}{a^2} = \Vert f \Vert^2_2 \int_{\mathbb{R}^{\ast}} \frac{ \vert \widehat{f} (\xi) \vert^2}{\vert \xi \vert} d\xi,\]
see \cite[Theorem 3.3.5]{HW89}. Thus $f \in L^2(\mathbb{R})$ is admissible if $\int_{\mathbb{R}^{\ast}} \frac{\vert \widehat{f}(\xi)\vert^2}{\vert \xi \vert} d\xi < \infty$. The function $f(x) = \sqrt{2\pi}xe^{-\pi x^2}$ satisfies this criterion since $\widehat{f}(\xi) = -\sqrt{2\pi} i \xi e^{-\pi\xi^2}$, which we obtained  by combining \cite[Proposition 2.2.5]{Pinskybook} with \cite[Example 2.2.7]{Pinskybook}. Hence $\pi$ is a square integrable, irreducible unitary representation of the affine group $G$. We are now ready to construct a nonzero function in $L^2(G)$ that has linearly dependent left translations. 

Let $\chi_{[0,1)}$ be the characteristic function on the interval $[0,1)$. It follows from the following {\em refinement equation}
\[ \chi_{[0,1)} (x) = \chi_{[0,1)} (2x) + \chi_{[0,1)} (2x-1) \]
that
\begin{equation}  \label{eq:dependexampleaffine}
\pi(1,0) \chi_{[0,1)} (x) = 2^{-1/2} \pi\left(2^{-1}, 0\right) \chi_{[0,1)} (x) + 2^{-1/2} \pi\left(2^{-1}, 2^{-1}\right) \chi_{[0,1)}(x).
\end{equation}
Thus $\chi_{[0,1)}$ has a linear dependency among the $\pi (a,b) \chi_{[0,1)}$, where $(a,b) \in G$. We now use $\chi_{[0,1)}$ to construct a nontrivial function in $L^2(G)$ that has linearly dependent left translations. Let $f\in L^2(\mathbb{R})$ be an admissible function for $\pi$ and let $(a, b) \in G$. Then the function
\[ F(a, b) = \langle \chi_{[0,1)}, \pi(a,b) f \rangle = \int_0^1 \vert a \vert^{-1/2} \overline{f \left( \frac{x-b}{a} \right)}\, dx \]
belongs to $L^2(G)$. By Proposition \ref{connectingprop}, $F(a,b)$ has linearly dependent left translations. More specifically,
\[ L(1,0) F(a,b) = 2^{-1/2} L(2^{-1}, 0) F(a,b) + 2^{-1/2} L (2^{-1}, 2^{-1}) F(a,b), \]
which translates to
\[ \int_0^1 \vert a \vert^{-1/2} \overline{f \left( \frac{x-b}{a} \right) } \, dx = \int_0^{1/2} \vert a \vert^{-1/2} \overline{ f \left( \frac{x-b}{a} \right)} \, dx + \int_{1/2}^1 \vert a \vert^{-1/2} \overline{f \left( \frac{x-b}{a} \right)} \, dx. \]

Equation \eqref{eq:dependexampleaffine} above was used in the proof
of \cite[Proposition 3.1]{Rosenblatt08} to show that the
time-frequency equation \eqref{eq:timefreqaffine} with $C = \sqrt{2}$
has a nonzero solution. Basically, equation \eqref{eq:timefreqaffine} is a reinterpretation of the above refinement equation where the representation $\pi$ is replaced by an equivalent representation. See \cite[Section 3]{Rosenblatt08} for the details.

We now turn our attention to the subgroup $K$ of the affine group $G$ which consists of all $(a, b) \in G$ for which $a >0$. This was the version of the affine group considered in \cite{Rosenblatt08}. The left Haar measure for $K$ is the same as the left Haar measure for $G$. Up to unitary equivalence there are two irreducible unitary infinite dimensional representations of $K$, see \cite[Section 6.7]{Follandbook95} for the details. One of these representations is given by
\[ \pi^+ (a,b) f(x) = a^{1/2} e^{2\pi i bx} f(ax) = E_b D_{a^{-1}} f(x), \]
where $(a,b) \in K$ and $f \in L^2(0, \infty)$. The representation
$\pi^+$ is square integrable. We are now ready to produce a
nontrivial function in $L^2(K)$ that has linearly dependent left
translations. From \eqref{eq:dependexampleaffine} we have
\[ \chi_{[0,1)} = 2^{-1/2} D_{2^{-1}} \chi_{[0,1)} + 2^{-1/2}T_{2^{-1}} D_{2^{-1}} \chi_{[0,1)}. \]
By taking Fourier transforms we obtain
\begin{align*}
\widehat{\chi}_{[0,1)} ( \xi) & = 2^{-1/2} D_2 \widehat{\chi}_{[0,1)} (\xi) + 2^{-1/2} E_{-2^{-1}}D_2 \widehat{\chi}_{[0,1)} (\xi) \\
                                          & = 2^{-1/2}\pi^+ (2^{-1},0)\widehat{\chi}_{[0,1)}(\xi) +2^{-1/2}\pi^+(2^{-1}, -2^{-1}) \widehat{\chi}_{[0,1)} (\xi).
\end{align*}
Hence, there is a linear dependency among the $\pi^+(a,b) \widehat{\chi}_{[0,1)}$, where $(a,b) \in K$. It follows from 
\[ \widehat{\chi}_{[0,1)} (\xi) = \frac{e^{-2\pi i \xi} - 1}{-2\pi i \xi} \]
that $\widehat{\chi}_{[0,1)} \in L^2(0, \infty)$. Pick an admissible function $f \in L^2(0, \infty)$ for $\pi^+$. Then the function 
\[ F(a,b) = \langle \widehat{\chi}_{[0,1)}, \pi^+(a,b) f \rangle = \int_0^{\infty} \widehat{\chi}_{[0,1)} (\xi) a^{1/2}e^{-2\pi ib\xi} \overline{f(\xi)} d\xi \]
is a member of $L^2(K)$. Proposition \ref{connectingprop} yields the following linear dependency in $L^2(K)$ among the left translations of $F(a,b)$.
\[ F(a,b) = 2^{-1/2} L(2^{-1}, 0) F(a,b) + 2^{-1/2} L(2^{-1}, -2^{-1}) F(a,b). \]
This equation can easily be verified by using the relations
\[ \widehat{\chi}_{[0,1)} ( \frac{\xi}{2})( 1 + e^{-\pi i \xi}) = \frac{(e^{- \pi i \xi} - 1)(1 + e^{- \pi i \xi})}{-2\pi i \xi} = \widehat{\chi}_{[0,1)} (\xi). \]

\section{Discrete groups and the Atiyah conjecture}\label{discretegroupzerodiv}
In this section we connect the problem of linear independence of left translations of a function to the Atiyah conjecture. Unless otherwise stated we make the assumption that all groups in this section are discrete. For discrete groups the Haar measure is counting measure. Let $f$ be a complex-valued function on a group $G$. We will represent $f$ as a formal sum $\sum_{g \in G} a_g g$, where $a_g \in \mathbb{C}$ and $f(g) = a_g$. Denote by $\ell^2(G)$ those formal sums for which $\sum_{g \in G} \vert a_g \vert^2 < \infty$, and $\mathbb{C}G$, the group ring of $G$ over $\mathbb{C}$ will consist of all formal sums that satisfy $a_g = 0$ for all but finitely many $g$. The group ring $\mathbb{C}G$ can also be thought of as the set of all functions on $G$ with compact support and $\ell^2(G)$ is a Hilbert space with Hilbert basis $\{ g \mid g \in G\}$. If $g \in G$ and $f = \sum_{x \in G} a_x x \in \ell^2(G)$, then the left translation of $f$ by $g$ is represented by the formal sum $\sum_{x \in G} a_{g^{-1}x} x$ since $L(g) f(x) = f(g^{-1}x)$. Suppose $\alpha = \sum_{g \in G} a_g g \in \mathbb{C}G$ and $f = \sum_{g \in G} b_g g \in \ell^2(G)$. We define a multiplication, known as convolution, $\mathbb{C}G \times \ell^2(G) \rightarrow \ell^2(G)$ by
\[ \alpha \ast f = \sum_{g,h \in G} a_g b_h gh = \sum_{g \in G} \left( \sum_{h \in G} a_{gh^{-1}} b_h \right) g. \]
Sometimes we will write $\alpha f$ instead of $\alpha \ast f$. Left multiplication by an element of $\mathbb{C}G$ is a bounded
linear operator on $\ell^2(G)$. So $\mathbb{C}G$ can be considered as
a subring of $\mathcal{B}(\ell^2(G))$, the space of bounded linear
operators on $\ell^2(G)$. 
We shall say that $G$ is torsion-free if the only element of finite order in $G$ is the identity element of $G$.
The strong Atiyah conjecture for the group $G$ is concerned with the
values the $L^2$-Betti numbers can take, and it implies the
following conjecture, which can also be considered an analytic version
of the zero divisor conjecture.
\begin{Conj}\label{zerodivconj}
Let $G$ be a torsion-free group.  If $0 \neq \alpha \in \mathbb{C}G$ and $0 \neq f \in \ell^2(G)$, then $\alpha \ast f \neq 0$.
\end{Conj}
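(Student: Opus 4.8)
The plan is to derive the conjecture from the strong Atiyah conjecture, by reinterpreting the convolution $f \mapsto \alpha \ast f$ as an element of the group von Neumann algebra $\mathcal{N}(G)$ and then controlling the von Neumann dimension of its kernel. Fix a nonzero $\alpha = \sum_{g} a_g g \in \mathbb{C}G$ and let $L_\alpha \colon \ell^2(G) \to \ell^2(G)$ be the bounded operator $L_\alpha f = \alpha \ast f$. Since $L_\alpha = \sum_g a_g L_g$ is a finite linear combination of left translations, it commutes with every right translation, and hence $L_\alpha$ lies in $\mathcal{N}(G)$, the commutant of the right regular representation. Proving that $\alpha \ast f \neq 0$ for every nonzero $f$ is the same as proving that $L_\alpha$ is injective, i.e. that $\ker L_\alpha = 0$.

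First I would note that $\ker L_\alpha$ is invariant under every right translation, so the orthogonal projection $P$ onto $\ker L_\alpha$ again lies in $\mathcal{N}(G)$. Consequently its von Neumann dimension $\dim_{\mathcal{N}(G)} \ker L_\alpha = \tau(P)$ is defined, where $\tau$ is the canonical faithful normal trace on $\mathcal{N}(G)$ normalized by $\tau(I) = 1$. The next step is to apply the strong Atiyah conjecture for $G$ to the $1 \times 1$ matrix $\alpha$ over $\mathbb{C}G$: it guarantees that $\tau(P)$ lies in $\frac{1}{\mathrm{lcm}(G)}\mathbb{Z}$, where $\mathrm{lcm}(G)$ is the least common multiple of the orders of the finite subgroups of $G$. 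Because $G$ is torsion-free this quantity equals $1$, so in fact $\tau(P) \in \mathbb{Z}$.

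To finish, I would combine this integrality with two elementary bounds. Since $0 \leq P \leq I$ we have $0 \leq \tau(P) \leq 1$; and since $\alpha \neq 0$ the operator $L_\alpha$ is nonzero (applied to the identity basis vector it returns $\alpha$), so $\ker L_\alpha \neq \ell^2(G)$ and $P \neq I$. Faithfulness of $\tau$ then gives $\tau(I - P) > 0$, so $\tau(P) < 1$. An integer lying in $[0, 1)$ must be $0$, whence $\tau(P) = 0$, and a second application of faithfulness forces $P = 0$. Thus $\ker L_\alpha = 0$, which is precisely the assertion of the conjecture.

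The main obstacle is that this argument is a \emph{reduction}: the real content is the strong Atiyah conjecture itself, which is open for general torsion-free groups and must be verified class-by-class --- it is exactly the hypothesis imposed in Theorem \ref{discretesubgroup}. Two subtler points also need attention. One must check that the kernel projection of a convolution operator is genuinely affiliated to $\mathcal{N}(G)$ and not merely to $\mathcal{B}(\ell^2(G))$, which is what the right-translation invariance of $\ker L_\alpha$ secures. More seriously, the standard formulation of the strong Atiyah conjecture uses coefficients in $\overline{\mathbb{Q}}$, so passing to an arbitrary $\alpha \in \mathbb{C}G$ requires either restricting to algebraic coefficients or invoking a version of the conjecture valid over $\mathbb{C}$; this coefficient-field question is the one spot where the otherwise routine reduction demands care.
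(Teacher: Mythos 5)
The statement you are proving is labelled as a \emph{Conjecture} in the paper, and the paper offers no proof of it: it only records that the strong Atiyah conjecture implies it, and that it is known for torsion-free elementary amenable groups by \cite[Theorem 2]{Linnell91}. Your argument is, in substance, a correct and careful write-up of exactly that implication. The reduction itself is sound: $L_\alpha$ commutes with the right regular representation and so lies in $\mathcal{N}(G)$; its kernel is right-invariant, so the kernel projection $P$ lies in $\mathcal{N}(G)$ and has a well-defined trace; the strong Atiyah conjecture for a torsion-free group forces $\tau(P) \in \mathbb{Z}$; and the bounds $0 \le \tau(P) < 1$ (the strict upper bound coming from $L_\alpha \delta_1 = \alpha \ne 0$ and faithfulness of $\tau$) then give $P = 0$. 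This is the standard argument and each step is justified correctly.

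The genuine gap is the one you yourself flag: the argument is conditional on the strong Atiyah conjecture for an arbitrary torsion-free group, which is open, so what you have produced is not a proof of Conjecture \ref{zerodivconj} but a proof that the strong Atiyah conjecture implies it. That implication is precisely the assertion the paper makes in passing before stating the conjecture, so you have not gone beyond what the paper already claims without proof. Your remark about the coefficient field is also well taken and should not be waved away: the strong Atiyah conjecture is usually formulated for matrices over $\overline{\mathbb{Q}}G$, and extending the integrality of $\dim_{\mathcal{N}(G)} \ker L_\alpha$ to arbitrary $\alpha \in \mathbb{C}G$ is a genuinely stronger statement that requires separate justification in general. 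If you want an unconditional result of this type, the correct move is the one the paper makes in Theorem \ref{discretesubgroup} and Section \ref{discretegroupzerodiv}: impose the strong Atiyah conjecture (or torsion-free elementary amenability) as a hypothesis on the group rather than attempting to prove the conjecture outright.
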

The hypothesis that $G$ is torsion-free is essential. Indeed, let 1 be the identity element of $G$ and let $g \in G$ such that $g \neq 1$ and $g^n =1$ for some $n \in \mathbb{N}$. Then $(1 + g + \cdots + g^{n-1}) \ast (1 - g) = 0$. The Atiyah conjecture is important in the study of von Neumann dimension. For further information see \cite{Cohen81, Linnell91,Linnell92} and \cite[Section 10]{Lueck02}.
In particular, the assertion of Conjecture \ref{zerodivconj} is known
for free groups, left-ordered groups and elementary amenable groups.

The following proposition gives the link between zero divisors and the linear independence of left translations of a function. 
\begin{Prop} \label{connecttozerodiv}
Let $G$ be a discrete group and let $f \in \ell^2(G)$.  Then $f$ has linearly independent left translations if and only if $\alpha \ast f \neq 0$ for all nonzero $\alpha \in \mathbb{C}G$.
\end{Prop}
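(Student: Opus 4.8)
The plan is to observe that left convolution by an element of $\mathbb{C}G$ is nothing more than a finite complex-linear combination of left translations, after which the proposition reduces to an unwinding of the definition of linear (in)dependence.

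First I would verify the basic identity that, for a single group element $g \in G$ regarded as the element of $\mathbb{C}G$ with coefficient $1$ at $g$ and $0$ elsewhere, convolution reproduces left translation, i.e. $g \ast f = L(g)f$. Writing $f = \sum_{h} b_h h$ and applying the convolution formula $\alpha \ast f = \sum_{x}\bigl(\sum_h a_{xh^{-1}} b_h\bigr) x$ with $a$ supported at $g$ forces $xh^{-1} = g$, hence $h = g^{-1}x$, so the coefficient of $x$ in $g \ast f$ is $b_{g^{-1}x} = f(g^{-1}x) = L(g)f(x)$. Extending by linearity, for any $\alpha = \sum_{k=1}^n c_k g_k$ with the $g_k$ distinct I obtain the key identity $\alpha \ast f = \sum_{k=1}^n c_k L(g_k) f$.

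With this identity in hand, both directions are immediate. For the forward direction, suppose $f$ has linearly independent left translations and let $0 \neq \alpha = \sum_{k=1}^n c_k g_k \in \mathbb{C}G$, where I may take the $g_k$ distinct and each $c_k$ nonzero by passing to the support of $\alpha$. Then $\alpha \ast f = \sum_k c_k L(g_k) f$, and since not all $c_k$ vanish, linear independence of the translations yields $\alpha \ast f \neq 0$. For the converse, suppose every nonzero $\alpha$ satisfies $\alpha \ast f \neq 0$, and suppose $\sum_{k=1}^n c_k L(g_k) f = 0$ is a relation with the $g_k$ distinct. Setting $\alpha = \sum_k c_k g_k$, the identity gives $\alpha \ast f = 0$, so by hypothesis $\alpha = 0$, which forces every $c_k = 0$ because the $g_k$ are distinct; hence the translations are linearly independent.

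There is essentially no analytic obstacle here: all of the content lives in the bookkeeping that matches a ``nonzero element of $\mathbb{C}G$'' (distinct group elements, some coefficient nonzero) with a ``nontrivial linear dependency among translations'' (distinct translation parameters, some coefficient nonzero). The one point that genuinely requires care is choosing the support of $\alpha$ so that the $g_k$ are honestly distinct, since distinctness of the group elements is exactly what converts the vanishing of a formal sum into the vanishing of its coefficients.
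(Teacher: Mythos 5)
Your proposal is correct and follows essentially the same route as the paper: verify that $g \ast f = L(g)f$ from the convolution formula, extend by linearity to get $\bigl(\sum_k c_k g_k\bigr) \ast f = \sum_k c_k L(g_k) f$, and then read off both directions of the equivalence. The paper states the two directions more tersely, but the content and the key identity are identical.
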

\begin{proof}
Let $g \in G$ and let $f = \sum_{x \in G} a_x x \in \ell^2(G)$. Then 
\[ g \ast f = \sum_{x \in G} a_x gx = \sum_{x \in G} a_{g^{-1}x} x = L(g)f. \]
Consequently, if $g_1, \dots, g_n \in G$ are distinct and $c_1, \dots, c_n$ are constants, then
\[ \sum_{k =1}^n c_k L(g_k) f = \sum_{k=1}^n c_k g_k \ast f = \left( \sum_{k=1}^n c_k g_k \right) \ast f. \]
The proposition now follows since $\sum_{k=1}^n c_k g_k \in \mathbb{C}G$.
\end{proof}
As we saw in Section \ref{affinegroup} there are nontrivial, square integrable functions on the affine group that have a linear dependency among their left translations. Since all nonidentity elements of the affine group have infinite order, it seems reasonable by taking a discrete subgroup $D$ of the affine group, such as $1 \rtimes \mathbb{Z}$, we might be able to construct a nontrivial function in $\ell^2(D)$ that has a linear dependency among its left translations. It would then be an immediate consequence of Proposition \ref{connecttozerodiv} that Conjecture \ref{zerodivconj} is false. However, for discrete subgroups $D$ of the affine group it is not true that there exists a nonzero function in $\ell^2(D)$ with a linear dependency among its left translations. Indeed, the affine group is a solvable Lie group, and all discrete subgroups of solvable Lie groups are polycyclic. By \cite[Theorem 2]{Linnell91} Conjecture \ref{zerodivconj} is true for torsion-free elementary amenable groups, a class of groups that contain all torsion-free polycyclic groups.

\section{Proof of Theorem \ref{discretesubgroup}}\label{proofofdiscretesubthm}
In this section we prove Theorem \ref{discretesubgroup}.  Recall that
our standing assumptions on the group $G$ is that it is locally
compact, Hausdorff with left invariant Haar measure $\mu$. Let $g_1,
\dots, g_n$ be elements of $G$ and let $c_1, \dots, c_n \in
\mathbb{C}$ be some constants. Set $\theta = \sum_{k=1}^n c_k L(g_k)$. So $\theta \in \mathcal{B}(L^2(G))$, the set of bounded linear operators on $L^2(G)$. Define
\[ \mathbb{C}G = \{ \sum_{g \in G} a_g L(g)\mid a_g = 0 \text{ for all but finitely many } g \in G\}. \]
Note that there exists a nonzero $f \in L^2(G)$ with linearly dependent left translations if and only if there exists a nonzero $\theta \in \mathbb{C}G$ with $\theta f = 0$. 

For the rest of this section $H$ will denote a discrete subgroup of $G$. We will also assume that $G$ is $\sigma$-compact in addition to our standing assumptions on $G$. The subgroup $H$ acts on $G$ by left multiplication. By \cite[Proposition B.2.4]{BHV08} there exists a Borel fundamental domain for this action of $H$ on $G$. More precisely, there exists a Borel subset $B$ of $G$ such that $hB \cap B = \emptyset$ for all $h \in H \setminus 1$ and $G = HB$ (thus $B$ is a system of right coset representatives of $H$ in $G$ which is also a Borel subset). If $X$ is a Borel subset of $G$, then we will identify $L^2(X)$ with the subspace of $L^2(G)$ consisting of all functions on $G$ whose support is contained in $X$.

Let $\{q_i \mid i\in \mathcal{I}\}$ be a
Hilbert basis for $L^2(B)$.  We claim that $S := \{L(h)q_i
\mid h\in H,\ i \in \mathcal{I}\}$ is a Hilbert basis for $L^2(G)$.
First we show that $S$ is orthonormal.  Write $hq_i$ for $L(h)q_i$.
If $h\ne k$, then $\langle
hq_i,kq_j\rangle = 0$, because the supports of $hq_i$ and $hq_j$ are
contained in $hB$ and $kB$ respectively, which are disjoint subsets.
On the other hand if $h = k$, then $\langle hq_i,hq_j \rangle
= \langle q_i,q_j\rangle$, because the Haar measure is left
invariant.  This proves that $S$ is orthonormal.  Finally we show
that the closure of the linear span $\overline{S}$ of $S$ is
$L^2(G)$.  Denote by $\chi_{hB}$ the characteristic function on $hB$. If $f \in L^2(G)$, then we may write $f
=\sum_{h\in H} f_h$, where $f_h = \chi_{hB} f$ (so $f_h$ has support
contained in $hB$).  Thus it will be sufficient to show that $L^2(hB)
\subseteq \overline{S}$.  Since $\overline{S}$ is invariant under
$H$, it will be sufficient to show that $L^2(B) \subseteq
\overline{S}$, which is obvious because the $q_i$ form a Hilbert
basis for $L^2(B)$.

For $i \in \mathcal{I}$,
let $S_i = \{ L(h)q_i \mid h \in H\}$ and let $\overline{S}_i$ denote
the closure of the linear span of $S_i$.
Now $L^2(G) = \bigoplus_{i \in \mathcal{I}}
\overline{S}_i$ (where $\bigoplus$ indicates the Hilbert direct sum).
The spaces $\overline{S}_i$ are isometric to $\ell^2(H)$. Indeed,
define a map $T_i$ from the Hilbert basis $S_i$
of $\overline{S}_i$ to the Hilbert basis $H$ of $\ell^2(H)$ via
$L(h)q_i \mapsto h$. Extend $T_i$ linearly to obtain an isometry 
$T_i \colon \overline{S}_i \rightarrow \ell^2(H)$. Moreover, the
isometry $T_i$ intertwines the natural left actions of $H$ on
$\overline{S}_i$ and $\ell^2(H)$.  Also let $\pi_i$ denote the
projection of $L^2(G)$ onto $\overline{S}_i$.  Then $\pi_i$ also
intertwines the natural left actions of $H$ on $L^2(G)$ and
$\overline{S}_i$.
Now suppose that there exists a nonzero $f \in L^2(G)$ and a nonzero $\theta \in \mathbb{C}H$ that
satisfies $\theta f =0$.  Then $k := T_i\pi_i f \ne 0$ for some $i$, and
$\theta * k =\theta k =0$ because $T_i\pi_i$ commutes with $\mathbb{C}H$.
Furthermore, $k \in l^2(H)$.
We can summarize the above as follows:
\begin{Prop} \label{Pzerodivisors}
Let $H$ be a discrete subgroup of the $\sigma$-compact locally
compact group $G$ and let $\theta \in \mathbb{C}H$.  If $\theta f =
0$ for some nonzero $f \in L^2(G)$, then $\theta *k = 0$ for some
nonzero $k \in \ell^2(H)$.
\end{Prop}

Now let $H$ be a torsion-free group which satisfies the strong Atiyah
conjecture, e.g.\ a torsion-free elementary amenable group.
Then for $0 \ne \theta \in \mathbb{C}H$, we know that $\theta *k \ne
0$ for all non-zero $k \in \ell^2(H)$. It follows from Proposition
\ref{Pzerodivisors} that $\theta  f \ne 0$ for all nonzero $f \in
L^2(G)$,
in other words, any nonzero element of $L^2(G)$ has linearly independent $H$-translations. The proof of Theorem \ref{discretesubgroup} is now complete.

In a similar fashion, we can prove
\begin{Thm} \label{discretesubgroup1}
Let $G$ be a locally compact $\sigma$-compact group and let $H$ be an
amenable discrete subgroup of $G$.  If $\alpha$ is a non-zerodivisor
in $\mathbb{C}H$, then $\alpha * f \ne 0$ for all nonzero $f \in
L^2(G)$.
\end{Thm}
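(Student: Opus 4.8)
The plan is to mirror the proof of Theorem~\ref{discretesubgroup}, reducing everything to a statement about the discrete group $H$ and then applying Proposition~\ref{Pzerodivisors}. Identifying $\alpha = \sum_g a_g g$ with the operator $\sum_g a_g L_g \in \mathbb{C}H$, I would argue contrapositively: if $\alpha f = 0$ for some nonzero $f \in L^2(G)$, then Proposition~\ref{Pzerodivisors} produces a nonzero $k \in \ell^2(H)$ with $\alpha * k = 0$. Thus it suffices to prove that $\alpha$ acts injectively on $\ell^2(H)$; the transfer from $G$ down to $H$ is already accomplished by the fundamental-domain decomposition, Lemma~\ref{opnormeq}, and Propositions~\ref{cstariso}--\ref{Pzerodivisors}, so the remaining work is purely a question about convolution by $\alpha$ on $\ell^2(H)$.

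The heart of the matter is the discrete claim that, for amenable $H$, a non-zerodivisor $\alpha \in \mathbb{C}H$ is injective as a convolution operator on $\ell^2(H)$. This is where amenability must be used, and I would prove it through the group von Neumann algebra $\mathcal{N}(H)$ and its faithful canonical trace $\tau$. Since $\alpha$ is a non-zerodivisor, multiplication by $\alpha$ is injective on $\mathbb{C}H$ and places $\alpha$ in a short exact sequence of $\mathbb{C}H$-modules with cokernel $\mathbb{C}H/\alpha\,\mathbb{C}H$. For amenable $H$ the algebra $\mathcal{N}(H)$ is dimension-flat over $\mathbb{C}H$ (see \cite[Section~10]{Lueck02}), so the relevant $\mathrm{Tor}$ term has von Neumann dimension zero, which forces $\dim_{\mathcal{N}(H)} \ker(\alpha \colon \ell^2(H) \to \ell^2(H)) = 0$. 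The crucial final point is that $\tau$ is faithful: the orthogonal projection onto $\ker \alpha$ lies in $\mathcal{N}(H)$, and a projection of trace zero is the zero projection, so the kernel is genuinely trivial. Equivalently, one can phrase this through the Ore condition for amenable group algebras, realizing $\alpha$ as a unit in the algebra $\mathcal{U}(H)$ of affiliated operators, whence $\alpha k = 0$ forces $k = 0$.

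I expect the main obstacle to be exactly this discrete claim, and specifically the upgrade from ``the von Neumann dimension of the kernel is zero'' to ``the kernel is trivial''; the resolution is the faithfulness of $\tau$, which is what makes the vanishing of the dimension meaningful. The reverse implication needs no hypothesis, since $\mathbb{C}H \subseteq \ell^2(H)$ shows that any operator injective on $\ell^2(H)$ is automatically a non-zerodivisor in $\mathbb{C}H$; all the content is in the direction used here, and amenability is what powers it, via dimension-flatness or, equivalently, the F\o lner-set approximation of von Neumann dimension. With the claim established, the contrapositive of Proposition~\ref{Pzerodivisors} immediately yields $\alpha * f \ne 0$ for every nonzero $f \in L^2(G)$, completing the proof.
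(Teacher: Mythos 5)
Your proposal is correct and follows essentially the same route as the paper: reduce to the discrete group via Proposition~\ref{Pzerodivisors}, then use amenability to show a non-zerodivisor in $\mathbb{C}H$ acts injectively on $\ell^2(H)$. The only difference is that the paper simply cites Elek (or \cite[Theorem 6.37]{Lueck02}) for that discrete claim, whereas you sketch its proof via dimension-flatness of $\mathcal{N}(H)$ over $\mathbb{C}H$ and faithfulness of the trace --- which is precisely the content of the cited result.
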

\begin{proof}
Since $\alpha \beta \ne 0$ for all nonzero $\beta \in \mathbb{C}H$,
it follows that $\alpha \beta \ne 0$ for all nonzero $\beta \in
\ell^2(H)$ by \cite[Theorem]{Elek03} (or use
\cite[Theorem 6.37]{Lueck02}).  The result now follows from
Proposition \ref{Pzerodivisors}.
\end{proof}

We saw in Section \ref{affinegroup} that for the affine group $A$ there exist nonzero $f$ in $L^2(A)$ with linearly dependent left translations. However, $\mathbb{Z}$ can be identified with the discrete subgroup $1 \rtimes \mathbb{Z}$ of $A$. A direct consequence of Theorem \ref{discretesubgroup} is 
\begin{Cor} \label{affinecor}
Let $A$ be the affine group. Then every nonzero $f$ in $L^2(A)$ has linearly independent left $\mathbb{Z}$-translations.
\end{Cor}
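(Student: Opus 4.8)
The plan is to obtain the corollary as an immediate application of Theorem \ref{discretesubgroup}, taking $G = A$ and $K = 1 \rtimes \mathbb{Z} = \{(1,n) : n \in \mathbb{Z}\}$. The theorem requires three things of this data: that $G$ be locally compact and $\sigma$-compact, that $K$ be a torsion-free discrete subgroup of $G$, and that $K$ satisfy the strong Atiyah conjecture. The entire task therefore reduces to verifying these hypotheses, after which the conclusion that every nonzero $f \in L^2(A)$ has linearly independent $K$-translations is exactly the assertion of the corollary once $K$ is identified with $\mathbb{Z}$.

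First I would dispense with the two routine conditions. The affine group $A = \mathbb{R}^{\ast} \rtimes \mathbb{R}$ is a Lie group, realized as an open subset of $\mathbb{R}^2$ with a smooth group structure, so it is second countable and locally compact, and second countable locally compact groups are $\sigma$-compact. For the subgroup, the group law $(a,b)(c,d) = (ac, b+ad)$ gives $(1,m)(1,n) = (1, m+n)$, so $n \mapsto (1,n)$ is an injective homomorphism of $\mathbb{Z}$ onto $K$; hence $K \cong \mathbb{Z}$, which is torsion-free, and $K$ is discrete in $A$ because $\mathbb{Z}$ is discrete in $\mathbb{R}$. None of these verifications requires any real work.

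The one hypothesis of substance is that $K \cong \mathbb{Z}$ satisfies the strong Atiyah conjecture, and this too is immediate: $\mathbb{Z}$ is abelian, hence a torsion-free elementary amenable group, and as recorded in Section \ref{discretegroupzerodiv} the conjecture (in particular Conjecture \ref{zerodivconj}) holds for such groups by \cite[Theorem 2]{Linnell91}. (Alternatively one could note directly that $\mathbb{C}\mathbb{Z}$ is the Laurent polynomial ring, an integral domain, and argue via the Fourier identification $\ell^2(\mathbb{Z}) \cong L^2(S^1)$, where convolution becomes multiplication by a nonzero trigonometric polynomial vanishing only on a null set; but invoking the elementary amenable case keeps us within the paper's framework.) With all three hypotheses confirmed, Theorem \ref{discretesubgroup} applies and yields the corollary. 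I expect no genuine obstacle here: the proof is purely a matter of checking that $\mathbb{Z}$, embedded as $1 \rtimes \mathbb{Z}$, falls under the hypotheses of the theorem, which it does in the most elementary way possible.
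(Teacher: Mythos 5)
Your proof is correct and follows exactly the paper's route: the paper also obtains the corollary as a direct consequence of Theorem \ref{discretesubgroup} by identifying $\mathbb{Z}$ with the discrete subgroup $1 \rtimes \mathbb{Z}$ of $A$ (the paper states this in one line, while you spell out the hypothesis checks). Nothing further is needed.
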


As noted in Section \ref{discretegroupzerodiv}, if $H$ is a discrete
group, we may regard
$\mathbb{C}H$ as a subalgebra of $\mathcal{B}(\ell^2(H))$.
Recall that the reduced group $C^*$-algebra of $H$, denoted
$\Cr(H)$, is the operator norm closure of
$\mathbb{C}H$ in $\mathcal{B}(\ell^2(H))$, and the group von Neumann
algebra of $H$, denoted $\mathcal{N}(H)$, is the weak closure of
$\mathbb{C}H$ in $\mathcal{B}(\ell^2(H))$.
We can also identify the norm and weak closures of $\mathbb{C}H$ in
$\mathcal{B}(L^2(G))$ with $\Cr(H)$ and $\mathcal{N}(H)$
respectively.  Though this is not needed in the sequel, we
hope it maybe useful to record this.

For $\theta \in \mathcal{B}(L^2(G))$ or $\mathcal{B}(\ell^2(H))$, let
$\Vert \theta \Vert$ or $\Vert \theta \Vert'$ denote the
corresponding operator norms respectively.  We retain the notation
used in the proof of Proposition \ref{Pzerodivisors}.
Observe that we have a natural isomorphism
$\mathcal{B}(\overline{S}_i) \rightarrow \mathcal{B}(\ell^2(H))$
induced by $T_i$.  Furthermore $L^2(G) = \bigoplus_{i \in \mathcal{I}}
\overline{S}_i$ (where $\bigoplus$ indicates the Hilbert direct sum),
and this a decomposition as left $\mathbb{C}H$-modules.
We will need the following:
\begin{Lem} \label{opnormeq} Let $\theta \in \mathbb{C}H$. Then $\Vert \theta \Vert = \Vert \theta \Vert'$
\end{Lem}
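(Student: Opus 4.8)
The plan is to exploit the orthogonal decomposition $L^2(G) = \bigoplus_{i \in \mathcal{I}} \overline{S}_i$ and to show that on each summand $\overline{S}_i$ the operator $\theta$ looks exactly like $\theta$ acting on $\ell^2(H)$, after which a direct-sum norm computation finishes the argument.

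First I would check that each $\overline{S}_i$ is invariant under left $H$-translation. Since $hq_i$ denotes $L_h q_i$, for $k, h \in H$ we have $L_k(hq_i) = L_{kh}q_i = (kh)q_i \in S_i$, so $L_k$ maps $S_i$ into itself and therefore preserves its closed span $\overline{S}_i$. Because $\theta = \sum_h a_h L_h$ is a finite linear combination of such translations, $\theta$ also preserves $\overline{S}_i$, and we may restrict it to a bounded operator $\theta|_{\overline{S}_i} \in \mathcal{B}(\overline{S}_i)$.

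Next I would verify that $\phi_i$ intertwines the two actions. The map $\phi_i$ is induced by the unitary $U_i \colon \overline{S}_i \to \ell^2(H)$ carrying the orthonormal basis $\{hq_i\}$ bijectively onto the orthonormal basis $\{h\}$ of $\ell^2(H)$, so that $\phi_i(T) = U_i T U_i^{-1}$; in particular $\phi_i$ preserves operator norms, being conjugation by a unitary. From $L_k(hq_i) = (kh)q_i$ one reads off $U_i L_k|_{\overline{S}_i} = L_k U_i$, whence $\phi_i(L_k|_{\overline{S}_i})$ is precisely left translation by $k$ on $\ell^2(H)$. By linearity $\phi_i(\theta|_{\overline{S}_i}) = \theta$, now regarded as the corresponding element of $\mathbb{C}H \subseteq \mathcal{B}(\ell^2(H))$, and since $\phi_i$ is norm-preserving we obtain $\Vert \theta|_{\overline{S}_i} \Vert = \Vert \theta \Vert'$ for every $i \in \mathcal{I}$.

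Finally I would assemble the pieces. As $\theta$ preserves each $\overline{S}_i$ and $L^2(G)$ is the Hilbert direct sum of the $\overline{S}_i$, the operator $\theta$ equals the direct sum $\bigoplus_i \theta|_{\overline{S}_i}$, whose norm is $\sup_i \Vert \theta|_{\overline{S}_i} \Vert$. Combining with the previous step yields $\Vert \theta \Vert = \sup_i \Vert \theta \Vert' = \Vert \theta \Vert'$, which is the claim. The step requiring the most care is the intertwining identity $\phi_i(L_k|_{\overline{S}_i}) = L_k$ together with the observation that $\phi_i$ is induced by a unitary and hence isometric on operators; once these are in hand, the direct-sum norm identity is routine.
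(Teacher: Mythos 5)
Your proof is correct and follows essentially the same route as the paper: both arguments rest on the orthogonal decomposition $L^2(G) = \bigoplus_{i \in \mathcal{I}} \overline{S}_i$, the fact that $\theta$ preserves each summand and acts there as a unitarily equivalent copy of $\theta$ on $\ell^2(H)$, and the resulting direct-sum norm identity. The paper simply carries out the two inequalities $\Vert\theta\Vert \le \Vert\theta\Vert'$ and $\Vert\theta\Vert' \le \Vert\theta\Vert$ by hand rather than invoking the formula $\Vert \bigoplus_i T_i \Vert = \sup_i \Vert T_i\Vert$, so the content is the same.
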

\begin{proof}
Note that $\theta$ can be considered as an operator on $L^2(G)$ or
$\ell^2(H)$.  If $u \in L^2(G)$, we may write $u = \sum_{i \in \mathcal{I}} u_i$ with $u_i \in \overline{S}_i$, so
\begin{align*}
\|\theta\| = \sup_{u \in L^2(G),\ \|u\|_2 = 1} \|\theta u\|_2 &=
\sup_{u \in L^2(G),\ \|u\|_2 = 1} \|\theta \sum_{i\in \mathcal{I}}
u_i\|_2\\
&\le \sup_{u \in L^2(G),\ \|u\|_2 = 1} \sqrt{\sum_{i \in \mathcal{I}}\|\theta\|^{\prime \,
2} \|u_i\|_2^2 } = \|\theta\|'.
\end{align*}
Fix $\iota \in \mathcal{I}$.  Then
\[
\|\theta\|' = \sup_{u \in \overline{S}_\iota,\ \|u\|_2 = 1}
\|\theta u\|_2 \le
\sup_{u \in L^2(G),\ \|u\|_2 = 1} \|\theta u\|_2 \le \|\theta\|.
\]
Therefore, $\Vert \theta \Vert = \Vert \theta \Vert'$.
\end{proof} 

Denote by $\mathcal{O}(H)$ the operator norm closure, and
$\mathcal{W}(H)$ the weak closure of $\mathbb{C}H$ in
$\mathcal{B}(L^2(G))$. The space $\mathcal{W}(H)$ is a von Neumann
algebra and by the double commutant theorem is equal to the strong
closure of $\mathbb{C}H$ in $\mathcal{B}(L^2(G))$. Note that $\mathcal{O}(H) \subseteq \mathcal{W}(H)$ and $\Cr(H) \subseteq \mathcal{N}(H)$. We now relate these various algebras:

\begin{Prop} \label{cstariso}
There is a $\ast$-isomorphism $\alpha \colon \mathcal{W}(H)
\rightarrow \mathcal{N}(H)$. Moreover, $\alpha$ preserves the
operator norm and maps $\mathcal{O}(H)$ onto $\Cr(H)$.
\end{Prop}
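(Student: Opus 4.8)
The plan is to exploit that the action of $\mathbb{C}H$ on $L^2(G)$ is nothing but an $\mathcal{I}$-fold amplification of its left regular action on $\ell^2(H)$, so that the two weak (resp.\ norm) closures can be canonically identified by restricting attention to a single summand. Throughout I would write $\lambda$ for the action of $\mathbb{C}H$ on $L^2(G) = \bigoplus_{i\in\mathcal{I}} \overline{S}_i$ and $\rho$ for its action on $\ell^2(H)$; the map $hq_i \mapsto h$ extends to a unitary $U_i \colon \overline{S}_i \to \ell^2(H)$ with $U_i\,\lambda(\theta)|_{\overline{S}_i} = \rho(\theta)\,U_i$ for $\theta\in\mathbb{C}H$, and $\phi_i(X) = U_i X U_i^{*}$ is the induced isomorphism $\mathcal{B}(\overline{S}_i)\to\mathcal{B}(\ell^2(H))$. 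Since $1 = L_e \in \mathbb{C}H$, the double commutant theorem gives $\mathcal{W}(H) = \lambda(\mathbb{C}H)''$.

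First I would fix one index $\iota\in\mathcal{I}$ and define $\alpha$ by compression to the $\iota$-th summand, $\alpha(A) = \phi_\iota\bigl(P_\iota A P_\iota|_{\overline{S}_\iota}\bigr)$, where $P_\iota$ is the orthogonal projection onto $\overline{S}_\iota$. Because each $\overline{S}_i$ is a reducing subspace for the $*$-algebra $\lambda(\mathbb{C}H)$, the projection $P_\iota$ lies in $\lambda(\mathbb{C}H)'$ and hence commutes with every element of $\mathcal{W}(H)$; consequently compression by $P_\iota$ is multiplicative on $\mathcal{W}(H)$, so $\alpha$ is a genuine $*$-homomorphism, and by construction $\alpha(\lambda(\theta)) = \rho(\theta)$. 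That $\alpha$ lands in $\mathcal{N}(H)$ follows from the weak-operator continuity of compression to a fixed subspace: writing $A\in\mathcal{W}(H)$ as a weak limit of a net from $\lambda(\mathbb{C}H)$ exhibits $\alpha(A)$ as a weak limit of the corresponding net in $\rho(\mathbb{C}H)$, hence as an element of $\mathcal{N}(H)$.

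For injectivity and norm preservation I would use the intertwiners between summands. For $i,j\in\mathcal{I}$ let $W_{ij}$ be the partial isometry sending $hq_i\mapsto hq_j$ and annihilating $\overline{S}_k$ for $k\ne i$; a direct check shows $W_{ij}\in\lambda(\mathbb{C}H)'$, so every $A\in\mathcal{W}(H)=\lambda(\mathbb{C}H)''$ commutes with all $W_{ij}$ and all $P_i$. Thus $A=\bigoplus_i A_i$ is block diagonal with $A_j = W_{ij}A_i W_{ij}^{*}$, so all blocks are unitarily equivalent; this yields $\|A\| = \sup_i\|A_i\| = \|A_\iota\| = \|\alpha(A)\|$ and shows $\alpha(A)=0\Rightarrow A_\iota=0\Rightarrow A=0$. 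Surjectivity is the step I expect to be the crux, since it genuinely requires matching weak closures across two different Hilbert spaces; the block/commutant bookkeeping is routine, but here one cannot avoid a density argument. I would invoke the Kaplansky density theorem: given $T\in\mathcal{N}(H)$, choose a bounded net $\rho(\theta_\beta)\to T$ strongly with $\theta_\beta\in\mathbb{C}H$; since amplification preserves strong convergence of bounded nets, $\lambda(\theta_\beta)=\bigoplus_i U_i^{*}\rho(\theta_\beta)U_i$ converges strongly to $A:=\bigoplus_i U_i^{*}TU_i$, so $A\in\mathcal{W}(H)$ and $\alpha(A)=T$.

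Finally the $C^{*}$-algebra statement drops out formally. The isometry of $\alpha$ on $\mathbb{C}H$ is exactly Lemma \ref{opnormeq}, and the block argument extends isometry to all of $\mathcal{W}(H)$; thus $\alpha$ is an isometric $*$-isomorphism carrying the norm-dense subalgebra $\lambda(\mathbb{C}H)$ of $\mathcal{O}(H)$ onto the norm-dense subalgebra $\rho(\mathbb{C}H)$ of $\Cr(H)$. By continuity it then maps the operator-norm closure $\mathcal{O}(H)$ onto the operator-norm closure $\Cr(H)$, completing the proof. As noted, the only nonformal ingredient is the surjectivity onto the weak closure, where Kaplansky density and the continuity of amplification do the real work.
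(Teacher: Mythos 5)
Your proof is correct, and it takes a genuinely different route from the paper's for the construction of $\alpha$ itself. The paper defines $\alpha(\theta)$ as the strong limit in $\mathcal{B}(\ell^2(H))$ of a net $(\theta_i)$ in $\mathbb{C}H$ converging strongly to $\theta$ on $L^2(G)$, checks that the limit is independent of the net, builds the inverse separately via the Kaplansky density theorem together with Lemma \ref{opnormeq}, and then asserts that the $\ast$-isomorphism property (and hence, by general $\Cr$-algebra theory, norm preservation) is easily checked. You instead define $\alpha$ as compression to a single block $\overline{S}_\iota$ and derive everything from the structural fact that the projections $P_i$ and the partial isometries $W_{ij}$ lie in the commutant $\lambda(\mathbb{C}H)'$: this makes well-definedness and multiplicativity immediate, and the unitary equivalence of the blocks $A_j = W_{ij}A_iW_{ij}^{*}$ gives injectivity and isometry directly, with no appeal to the automatic isometry of $\ast$-isomorphisms. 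Your surjectivity argument coincides with the paper's inverse construction (Kaplansky density plus the fact that a uniformly bounded, blockwise strongly convergent net converges strongly on the Hilbert direct sum); note that the uniform bound you import from Kaplansky is exactly where Lemma \ref{opnormeq} enters, just as in the paper. One thing your version buys: a strongly Cauchy net of operators need not have a bounded limit (Banach--Steinhaus controls sequences, not nets), so the paper's forward direction implicitly relies on identifying the pointwise limit with the restriction of the bounded operator $\theta$ to the invariant subspace $\overline{S}_\iota$; your definition $\alpha(A) = \phi_\iota\bigl(P_\iota A P_\iota|_{\overline{S}_\iota}\bigr)$ makes boundedness and well-definedness manifest from the start. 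The price is the (one-line) verification that each $W_{ij}$ commutes with $\lambda(\mathbb{C}H)$.
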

\begin{proof}
Recall that for $u \in L^2(G)$, we can uniquely write $u = \sum_{i \in
\mathcal{I}} u_i$ with $u_i \in \overline{S}_i$. Let $\theta \in
\mathcal{W}(H)$. Then there exists a net $(\theta_i)$ in
$\mathbb{C}H$ which converges strongly to $\theta$. Therefore for
every $u \in L^2(G)$, the net $(\theta_iu)$ is convergent in
$L^2(G)$, consequently the net $(\theta_i u_j)$ is convergent for
every $j$, in particular $(\theta_i f)$ is a Cauchy net in
$\ell^2(H)$ for every $f \in \ell^2(H)$. We deduce that $(\theta_i)$
is a Cauchy net in $\mathcal{B}(\ell^2(H))$ (in the strong operator
topology) and hence converges to an operator $\theta^{\prime} \in
\mathcal{N}(H)$. We note that $\theta^{\prime}$ doesn't depend on the
choice of the net $(\theta_i)$ and therefore we have a well-defined map $\alpha \colon \mathcal{W}(H) \rightarrow \mathcal{N}(H)$, where $\alpha (\theta) = \theta^{\prime}$ and $\alpha$ is the identity on $\mathbb{C}H$.

We now construct the inverse to $\alpha$ by reversing the above steps. Let $\phi \in \mathcal{N}(H)$. By the Kaplansky density theorem there exists a net $(\theta_i)$ in $\mathbb{C}H$ which converges strongly to $\phi$ and $\Vert \theta_i \Vert^{\prime}$ bounded. Thus $\Vert \theta_i \Vert$ is bounded because $\Vert \theta_i \Vert = \Vert \theta_i \Vert^{\prime}$ for each $i$ by Lemma \ref{opnormeq}. Now let $u \in L^2(G)$. If $\mathcal{J}$ is a finite subset of $\mathcal{I}$, set $v_{\mathcal{J}} = \sum_{j \in \mathcal{J}} u_j$. Then $(\theta_i v_j)$ converges in $L^2(G)$ for every $\mathcal{J}$. Since $\Vert \theta_i \Vert$ is bounded, it follows that $(\theta_i u )$ is convergent in $L^2(G)$ and we conclude that $(\theta_i)$ converges strongly to an operator $\tilde{\phi} \in \mathcal{B}(L^2(G))$. It follows that we have a well-defined map $\phi \rightarrow \tilde{\phi} \colon \mathcal{N}(H) \rightarrow \mathcal{W}(H)$, which is the inverse to $\alpha$. 

It is easily checked that $\alpha$ is a $\ast$-isomorphism and
therefore is an isomorphism of $C^*$-algebras, in particular it
preserves the operator norm. We deduce that $\alpha$ maps
$\mathcal{O}(H)$ onto $\Cr(H)$.
\end{proof}

\begin{Rem}
Proposition \ref{cstariso} can be used to give a different
proof of Proposition \ref{Pzerodivisors}; for details, see
\cite[Chapter 2.5]{Ahmedthesis}.
\end{Rem}

\section{The Weyl-Heisenberg group} \label{WeylHeisenberggroup}
In this section we use techniques developed in this paper to
determine when $f =0$ is the only solution to the time-frequency
equation \eqref{eq:timefreqheisen}. The relevant group here is the Weyl-Heisenberg group since it has an irreducible representation that is square integrable. 

Let $n \in \mathbb{N}$. The {\em Heisenberg group} $H_n$ is the set of $(n+2) \times (n+2)$ matrices of the form
\[ \begin{pmatrix} 1  &  a  &  z  \\
                                0   &  1_n  & b \\
                                0   &   0     &  1  \end{pmatrix} \]
where $a$ is a $1 \times n$ matrix, $b$ is a $n \times 1$ matrix, the
zero in the $(2,1)$ position is the $n \times 1$ zero matrix, the
zero in the $(3,2)$ position is the $1 \times n$ zero matrix, and the
$1_n$ in the $(2,2)$ position is the $n \times n$ identity matrix.
Another way to represent $H_n$ is as the product $\mathbb{R} \times
\widehat{\mathbb{R}^n} \times \mathbb{R}^n$.  Here we view
$\mathbb{R}^n$ as $n\times 1$ column matrices and
$\widehat{\mathbb{R}^n}$ as $1 \times n$ row matrices.
For $(z_1, a_1, b_1), (z_2, a_2, b_2) \in H_n$ the group law becomes $(z_1, a_1, b_1) (z_2, a_2, b_2) = (z_1 + z_2 +a_1 \cdot b_2, a_1 + a_2, b_1 + b_2)$. Thus the identity element in $H_n$ is $(0,0,0)$ and $(z, a, b)^{-1} = (a \cdot b -z, -a, -b)$. For $f \in L^2(\mathbb{R}^n)$ and $(z, a, b) \in H_n$ define
\[ \pi(z, a, b)f(x) = e^{2\pi iz}e^{-2\pi i a \cdot b} e^{2\pi i a \cdot x} f(x - b). \]
It turns out that $\pi$ is a representation of $H_n$ on
$L^2(\mathbb{R}^n)$. Indeed, let $(z_1, a_1, b_1)$,
$(z_2, a_2, b_2) \in H_n$. Then
\begin{align*}
\pi(z_1, a_1, b_1) &\bigl( \pi(z_2, a_2, b_2) f(x) \bigr) = \pi(z_1, a_1, b_1) \bigl( e^{2 \pi i z_2} e^{-2\pi i a_2 \cdot b_2} e^{2 \pi i a_2 \cdot x} f(x - b_2) \bigr)  \\
  & = e^{2\pi i z_1} e^{2 \pi i z_2} e^{-2\pi i a_1 \cdot b_1} e^{-2\pi i a_2 \cdot b_2} e^{2 \pi i a_1 \cdot x} e^{2 \pi i a_2 \cdot (x - b_1)} f(x - b_2 - b_1) \\
  & = e^{2\pi i (z_1 + z_2)} e^{-2\pi i (a_1 \cdot b_1 +a_2 \cdot b_2)} e^{-2\pi i a_2 \cdot b_1} e^{2\pi i a_2\cdot x} f(x - (b_1 + b_2)) \\
  & = e^{2\pi i (z_1 + z_2 + a_1 \cdot b_2)} e^{-2\pi i (a_1 + a_2) \cdot (b_1 + b_2)} e^{2\pi i (a_1 + a_2)\cdot x} f(x - (b_1 + b_2))\\
  & = \bigl( \pi(z_1, a_1, b_1) \pi(z_2, a_2, b_2)\bigr)  f(x).
\end{align*}
Let $Z = \langle (2\pi, 0, 0) \rangle$, the subgroup of $H_n$
generated by $(2\pi, 0, 0)$. Set $\tilde{H}_n = H_n/Z$. The group
$\tilde{H}_n$ is known as the {\em Weyl-Heisenberg group}. Clearly $Z
= \ker \pi$ and so $\pi$ induces a representation $\tilde{\pi}$ on
$\tilde{H}_n$. Observe that $\tilde{H}_n = \{ (t, a, b) \mid t \in
\mathbb{T}, a, b \in \mathbb{R}^n \}$ (here $\mathbb{T}$ is the unit
circle $\{z\in \mathbb{Z} \mid |z|=1\}$).
The Lebesgue measure on $H_n =
\mathbb{R} \times \widehat{\mathbb{R}^n} \times \mathbb{R}^n$ is left and right
invariant Haar measure on $H_n$. Similarly, Lebesgue measure on
$\mathbb{T} \times \widehat{\mathbb{R}^n} \times \mathbb{R}^n$ is left and
right invariant Haar measure on $\tilde{H}_n$ (here Lebesgue measure
on $\mathbb{T}$ is normalized so that $\int_{\mathbb{T}} dt =1$). The
next result was proved in \cite[Proposition 3.2.4]{HW89} for the special case $n=1$. By interchanging the roles of $a$ and $b$ the proof given there carries through verbatim to our case.
\begin{Prop} \label{sqintWeylHeis} If $f, g \in L^2(\mathbb{R}^n)$, then 
\[ \int_{\widehat{\mathbb{R}^n}} \int_{\mathbb{R}^n} \int_{\mathbb{T}} \vert \langle f, \tilde{\pi} (t, a, b) g \rangle \vert^2 \, dt db da = \Vert f \Vert_2^2 \Vert g \Vert_2^2. \]
\end{Prop}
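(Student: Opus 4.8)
The plan is to compute the integrand explicitly, recognize the inner $a$-integral as an instance of Plancherel's theorem, and then collapse the $b$-integral by translation invariance. First I would write out the induced representation as $\tilde{\pi}(t,a,b)g(x) = t\, e^{-2\pi i a\cdot b} e^{2\pi i a\cdot x} g(x-b)$, where $t \in \mathbb{T}$ corresponds to the class of $e^{2\pi i z}$. Then for fixed $a,b$ I would expand
\[ \langle f, \tilde{\pi}(t,a,b) g \rangle = \bar{t}\, e^{2\pi i a\cdot b} \int_{\mathbb{R}^n} f(x)\, e^{-2\pi i a\cdot x}\, \overline{g(x-b)}\, dx. \]
Since $\lvert \bar{t} \rvert = \lvert e^{2\pi i a\cdot b} \rvert = 1$, the quantity $\lvert \langle f, \tilde{\pi}(t,a,b) g \rangle \rvert^2$ is independent of $t$, so integrating over $\mathbb{T}$ (with the normalization $\int_{\mathbb{T}} dt = 1$) leaves it unchanged. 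Setting $G_b(x) = f(x)\overline{g(x-b)}$, the remaining factor is exactly $\lvert \widehat{G_b}(a) \rvert^2$ in the Fourier convention fixed in Section~\ref{affinegroup}.

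Next I would integrate over $a \in \widehat{\mathbb{R}^n}$. Applying Plancherel's theorem to $G_b$ for each fixed $b$ gives $\int_{\widehat{\mathbb{R}^n}} \lvert \widehat{G_b}(a) \rvert^2\, da = \lVert G_b \rVert_2^2 = \int_{\mathbb{R}^n} \lvert f(x) \rvert^2 \lvert g(x-b) \rvert^2\, dx$. It then remains to integrate over $b \in \mathbb{R}^n$. Because the integrand is nonnegative, Tonelli's theorem permits interchanging the $x$- and $b$-integrals, and for each fixed $x$ the translation invariance of Lebesgue measure yields $\int_{\mathbb{R}^n} \lvert g(x-b) \rvert^2\, db = \lVert g \rVert_2^2$. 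What is left is $\lVert g \rVert_2^2 \int_{\mathbb{R}^n} \lvert f(x) \rvert^2\, dx = \lVert f \rVert_2^2 \lVert g \rVert_2^2$, which is the claimed identity.

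The main obstacle is the rigor of the Plancherel step: the product $G_b(x) = f(x)\overline{g(x-b)}$ of two $L^2$ functions lies in $L^1$ by Cauchy--Schwarz, but need not lie in $L^2$ for every fixed $b$, so $\lVert G_b \rVert_2$ may be infinite on a set of $b$ of measure zero. I would handle this in one of two ways. The cleanest is a density argument: establish the identity first for $f,g$ in a dense subclass (say, bounded functions of compact support, or Schwartz functions), where $G_b \in L^1 \cap L^2$ for all $b$ and every step above is literally valid, and then extend to all of $L^2(\mathbb{R}^n)$ using that the right-hand side $\lVert f \rVert_2^2 \lVert g \rVert_2^2$ is continuous in $f$ and $g$. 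Alternatively, one keeps everything nonnegative and applies the extended form of Plancherel valid in $[0,\infty]$, so that the chain of Tonelli interchanges holds verbatim even when individual slices are infinite, the finiteness of the final answer then being a consequence rather than a hypothesis. Either route makes precise the statement that $\tilde{\pi}$ is square integrable, with the admissibility constant equal to $\lVert g \rVert_2$.
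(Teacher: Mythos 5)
Your computation is correct and is the standard orthogonality-relations argument (Moyal's formula for the short-time Fourier transform); the paper itself gives no proof, deferring to \cite[Proposition 3.2.4]{HW89}, and your argument is essentially the one found there. You have also correctly identified and resolved the only delicate point, namely that $f(\cdot)\overline{g(\cdot-b)}$ lies in $L^1$ but not a priori in $L^2$, so the Plancherel step requires either a density argument or the extended (possibly infinite-valued) form of Plancherel combined with Tonelli.
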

\begin{Cor} \label{irredWeylHeis} The representation $\tilde{\pi}$ of
$\tilde{H}_n$ on $L^2(\mathbb{R}^n)$ is irreducible and square integrable, and every $g \in L^2(\mathbb{R}^n)$ is admissible.
\end{Cor}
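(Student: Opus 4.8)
The plan is to verify the three assertions of the corollary, two of which fall out immediately from Proposition \ref{sqintWeylHeis} while the third rests on a classical commutant computation. First I would record that every $g \in L^2(\mathbb{R}^n)$ is admissible: setting $f = g$ in Proposition \ref{sqintWeylHeis} gives $\int_{\tilde{H}_n} \vert F_g(t,a,b) \vert^2\, dt\, db\, da = \Vert g \Vert_2^4 < \infty$, so $F_g \in L^2(\tilde{H}_n)$ for every $g$; in particular $\tilde{\pi}$ possesses nonzero admissible vectors, which is exactly the input needed for the definition of square integrability.

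For unitarity I would observe that, for fixed $(t,a,b)$, the operator $\tilde{\pi}(t,a,b)$ is the composition of multiplication by the unimodular constant $t\, e^{-2\pi i a \cdot b}$, the modulation $E_a f(x) = e^{2\pi i a \cdot x} f(x)$, and the translation $T_b f(x) = f(x - b)$. Each of these preserves the $L^2$-norm and is invertible, so $\tilde{\pi}(t,a,b) \in U(L^2(\mathbb{R}^n))$. The homomorphism property was already checked for $\pi$ in the computation preceding Proposition \ref{sqintWeylHeis}, and it descends to the quotient $\tilde{H}_n = H_n/Z$ because $Z = \ker \pi$. Strong continuity of $(t,a,b) \mapsto \tilde{\pi}(t,a,b) g$ follows from continuity of $t \mapsto t$ together with the standard continuity of $b \mapsto T_b g$ and $a \mapsto E_a g$ in the $L^2$-norm.

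The substantive step, and the one I expect to be the main obstacle, is irreducibility, which the paper's definition of square integrability presupposes; here I would invoke Schur's lemma. Since $\tilde{\pi}(1,0,b) = T_b$ and $\tilde{\pi}(1,a,0) = E_a$, any bounded operator $M$ commuting with every $\tilde{\pi}(t,a,b)$ commutes with all modulations $E_a$ and all translations $T_b$. Commuting with every $E_a$ places $M$ in the commutant of the maximal abelian algebra of multiplication operators on $L^2(\mathbb{R}^n)$, forcing $M$ to be multiplication by some $m \in L^\infty(\mathbb{R}^n)$; commuting in addition with every $T_b$ then forces $m(x - b) = m(x)$ for all $b$, so $m$ is constant and $M$ is scalar. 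By Schur's lemma $\tilde{\pi}$ is irreducible, and together with the admissible vectors produced above this shows $\tilde{\pi}$ is square integrable in the sense defined earlier. The delicate point is the maximal-abelian argument, namely justifying that an operator commuting with all modulations must itself be a multiplication operator; this is where I would be most careful, although it is entirely classical.
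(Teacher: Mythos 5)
Your proof is correct, but your route to irreducibility is genuinely different from the paper's. For admissibility you do exactly what the paper does: put $f=g$ in Proposition \ref{sqintWeylHeis}. For irreducibility, however, you run the classical commutant computation --- an operator commuting with every modulation $E_a$ lies in the commutant of the multiplication algebra and is therefore multiplication by some $m\in L^\infty(\mathbb{R}^n)$, and commuting with every translation $T_b$ forces $m$ to be constant --- and then invoke Schur's lemma. The paper instead reads irreducibility straight off Proposition \ref{sqintWeylHeis}: if $g\ne 0$ and $f$ is orthogonal to every $\tilde{\pi}(t,a,b)g$, the left-hand side of the orthogonality relation vanishes, so $\Vert f\Vert_2^2\Vert g\Vert_2^2=0$ and $f=0$; hence the closed invariant subspace generated by any nonzero vector is all of $L^2(\mathbb{R}^n)$. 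The paper's argument is shorter and entirely self-contained given the proposition, and it sidesteps precisely the step you flag as delicate, namely that the von Neumann algebra generated by the characters $e^{2\pi i a\cdot x}$ is the full multiplication algebra (equivalently, that an operator commuting with all $E_a$ is itself a multiplication operator). That fact is classical and your argument does go through, but it is an extra input the paper never needs. Your explicit check of unitarity and strong continuity (factoring $\tilde{\pi}(t,a,b)$ as a unimodular scalar times a modulation composed with a translation) is left implicit in the paper, so including it does no harm.
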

\begin{proof}
By taking $f = g$ in the above proposition we see immediately that every element of $L^2(\mathbb{R}^n)$ is admissible. Suppose $g \in L^2(\mathbb{R}^n) \setminus \{ 0 \}$ is fixed and assume $f \in L^2(\mathbb{R}^n)$ satisfies $\langle f, \tilde{\pi}(t, a, b) g \rangle = 0$ for all $(t, a, b) \in \tilde{H}_n$. Then $\Vert f \Vert_2 \Vert g \Vert_2 =0$ and it follows that $f=0$. Hence $\tilde{\pi}$ is irreducible as desired.
\end{proof}

\begin{Prop} \label{timefreq}
Let $n\in \mathbb{N}$, let $(a_k,b_k) \in \mathbb{R}^{2n}$
be distinct nonzero elements such that 
$(a_k,b_k)$ generate a discrete subgroup of $\mathbb{R}^{2n}$ and
$a_h\cdot b_k \in \mathbb{Q}$ for all $h,k$ (where $k,h \in
\mathbb{N}$).  If $r\in \mathbb{N}$ and
\[\sum_{k=1}^{r} c_ke^{2\pi i b_k \cdot t}f(t+a_k)=0\]
with $0\ne c_k\in \mathbb{C}$ constants, then $f=0$. 
\end{Prop}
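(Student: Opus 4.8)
The plan is to transcribe the time-frequency equation into a linear dependency for the Schr\"{o}dinger representation $\tilde\pi$ of $\tilde H_n$ and then push it, via Proposition \ref{connectingprop}, onto a suitable discrete subgroup. First I would rewrite the given relation in terms of $\tilde\pi$. Comparing $e^{2\pi i b_k\cdot t}f(t+a_k)$ with the defining formula for $\pi$, one checks that $e^{2\pi i b_k\cdot t}f(t+a_k)=e^{-2\pi i a_k\cdot b_k}\,\tilde\pi(\tilde g_k)f$, where $\tilde g_k$ is the image in $\tilde H_n$ of $(0,b_k,-a_k)\in H_n$. Hence the hypothesis becomes $\sum_{k=1}^r c_k'\,\tilde\pi(\tilde g_k)f=0$ with $c_k'=c_k e^{-2\pi i a_k\cdot b_k}\neq 0$, and the $\tilde g_k$ are distinct because the $(a_k,b_k)$ are. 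By Corollary \ref{irredWeylHeis} the representation $\tilde\pi$ is irreducible, unitary and square integrable, so Proposition \ref{connectingprop} is available.

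Next I would analyze the subgroup $K=\langle\tilde g_1,\dots,\tilde g_r\rangle$ of $\tilde H_n$. Projecting $\tilde H_n=\mathbb{T}\times\widehat{\mathbb{R}^n}\times\mathbb{R}^n$ onto its last two coordinates carries $K$ onto the group $\Lambda$ generated by the $(b_k,-a_k)$; since $(a,b)\mapsto(b,-a)$ is a linear automorphism of $\mathbb{R}^{2n}$, $\Lambda$ is discrete. The kernel of this projection on $K$ is $K\cap\mathbb{T}$, and the group law shows that every central coordinate occurring in $K$ is an integer combination of the numbers $a_i\cdot b_j$, hence lies in $\frac1N\mathbb{Z}$ for a common denominator $N$; so $K\cap\mathbb{T}$ is finite. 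An extension of a discrete group by a finite group inside a Lie group is discrete, so $K$ is a discrete subgroup of the $\sigma$-compact group $\tilde H_n$. Moreover $K$ is a finitely generated nilpotent group (a subquotient of $H_n$), hence amenable, and it is a central extension $1\to T\to K\to\mathbb{Z}^d\to1$ with $T=K\cap\mathbb{T}$ finite cyclic and $K/T\cong\Lambda$ free abelian.

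The heart of the matter is to show that $\theta:=\sum_{k=1}^r c_k'\tilde g_k$ is a non-zerodivisor in the group ring $\mathbb{C}K$, and here lies the main obstacle: when some product $a_h\cdot b_k$ is a noninteger rational, $T$ is nontrivial, $K$ has torsion, $\mathbb{C}K$ is not a domain, and Theorem \ref{discretesubgroup} simply does not apply. I would get around this using that $T$ is central: the idempotents $e_\chi=\frac1{|T|}\sum_{t\in T}\overline{\chi(t)}\,t$, indexed by the characters $\chi$ of $T$, are central and orthogonal with $\sum_\chi e_\chi=1$, so $\mathbb{C}K=\bigoplus_\chi e_\chi\mathbb{C}K$. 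Choosing a set-theoretic section of $K\to\mathbb{Z}^d$ identifies each summand $e_\chi\mathbb{C}K$ with a twisted group algebra $\mathbb{C}_{\sigma_\chi}[\mathbb{Z}^d]$. A twisted group algebra of $\mathbb{Z}^d$ is a domain: fixing a total order on $\mathbb{Z}^d$ compatible with addition, the top-order term of a product of two nonzero elements cannot cancel. Since the $\tilde g_k$ have distinct images in $\mathbb{Z}^d$, the image of $\theta$ in each $e_\chi\mathbb{C}K$ is a nonzero element of a domain, hence a non-zerodivisor there; therefore $\theta$ is a non-zerodivisor in $\mathbb{C}K$.

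Finally I would conclude. As $K$ is an amenable discrete subgroup of the $\sigma$-compact group $\tilde H_n$ and $\theta$ is a non-zerodivisor in $\mathbb{C}K$, Theorem \ref{discretesubgroup1} gives $\theta*F\neq0$ for every nonzero $F\in L^2(\tilde H_n)$; that is, no nonzero $F$ satisfies $\sum_k c_k'L(\tilde g_k)F=0$. By the contrapositive of Proposition \ref{connectingprop}, no nonzero $f\in L^2(\mathbb{R}^n)$ can satisfy $\sum_k c_k'\tilde\pi(\tilde g_k)f=0$, which is the transcribed time-frequency equation, and so $f=0$. The delicate point, and the reason the rational hypothesis suffices in place of an integrality hypothesis, is exactly the central-character decomposition of the third paragraph, which replaces the (false in the torsion case) appeal to Theorem \ref{discretesubgroup} by the amenable non-zerodivisor statement of Theorem \ref{discretesubgroup1}.
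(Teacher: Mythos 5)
Your proof is correct and follows the same overall route as the paper: transcribe the time-frequency equation as a linear dependency for $\tilde{\pi}$, use the rationality hypothesis to get a discrete (nilpotent, hence amenable) subgroup of $\tilde{H}_n$, show the corresponding group-ring element is a non-zerodivisor by a leading-term argument over an ordered abelian quotient, and conclude via Corollary \ref{irredWeylHeis}, Theorem \ref{discretesubgroup1} and the contrapositive of Proposition \ref{connectingprop}. The only real difference is in how the central circle is handled in the non-zerodivisor step: the paper proves the stronger statement that $\sum_k d_k g_k$ is a non-zerodivisor in all of $\mathbb{C}\tilde{H}_n$, writing elements as $\sum_{t\in T}\beta_t t$ with $\beta_t\in\mathbb{C}\mathbb{T}$ over a transversal $T$ of $\mathbb{T}$ and running the leading-term argument directly over the ordered group $\mathbb{R}^{2n}$ (the coefficient of the top coset is a single translate of $\beta_s$, so no domain property of $\mathbb{C}\mathbb{T}$ is needed), which sidesteps any analysis of $K\cap\mathbb{T}$. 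Your version localizes to $\mathbb{C}K$, identifies $K\cap\mathbb{T}$ as finite central, and decomposes $\mathbb{C}K$ by central characters into twisted group algebras of $\mathbb{Z}^d$, which are domains; this is more work but makes explicit the structure of $K$ and exactly why torsion in $K$ (which defeats Theorem \ref{discretesubgroup}) is harmless here. Both arguments are sound and rest on the same ordering idea.
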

\begin{proof}
We have $\mathbb{R}^{2n} = \tilde{H}_n/\mathbb{T}$.  Lift the
$(a_k,b_k)$ to the elements $g_k := (1,a_k,b_k)
\in \tilde{H}_n$.  Note that
the hypothesis $a_h\cdot b_k \in \mathbb{Q}$ ensures that $\langle
g_1, \dots, g_r\rangle$ is a discrete subgroup of $\tilde{H}_n$.
We claim that if $0 \ne d_k \in \mathbb{C}$, then $\alpha
:= \sum_{k=1}^r d_k g_k$ is a non-zerodivisor in $\mathbb{C}\tilde{H}_n$.
Indeed if $0 \ne \beta \in \mathbb{C}\tilde{H}_n$ and $\alpha\beta =
0$, let $T$ be a transversal for $\mathbb{T}$ in $\tilde{H}_n$
containing $\{g_1,\dots,g_r\}$
and write $\beta = \sum_{t\in T} \beta_t t$ where
$\beta_t \in \mathbb{C}\mathbb{T}$.  Since $\mathbb{R}^{2n}$ is an
ordered group, we can apply a leading term argument: let 
$k$ be such that $g_k\in T$ is largest and let
$s \in T$ be the largest element such that $\beta_s \ne 0$.  Then
by considering $g_ks$, we see that $\alpha\beta \ne 0$ because
$d_k\beta_s \ne 0$, which is a contradiction.
The result now follows
from Proposition \ref{connectingprop}, Corollary \ref{irredWeylHeis}
and Theorem \ref{discretesubgroup1}.
\end{proof}

\section{Shearlet Groups}\label{shearletgroups}

We now investigate the problem of linear independence of left translations of functions in $L^2(S)$, where $S$ denotes the shearlet group. This fits the theme of our paper since $S$ has an irreducible, square integrable representation on $L^2(\mathbb{R}^2)$. We begin by defining the shearlet group.

For $a \in \mathbb{R}^+$ (the positive real numbers)
and $s \in \mathbb{R}$ let $A_a =
\begin{pmatrix}  a   &  0  \\ 0   &  \sqrt{a} \end{pmatrix}$,
$S_s = \begin{pmatrix}
1  &  s \\
0  & 1 \end{pmatrix}$ and let $G = \{ S_s A_a \mid a\in \mathbb{R}^+,
s \in \mathbb{R}\}$. The {\em shearlet group} $S$ is defined to be $S
= G \ltimes \mathbb{R}^2$.  The group multiplication for $S$ is given
by $(M,t)(M',t') = (MM', t + Mt')$, where $M \in G$ and $t \in
\mathbb{R}^2$ (here we are considering elements of $\mathbb{R}^2$ as
column vectors).
The left Haar measure for $S$ is $\frac{da ds dt}{a^3}$ and the right Haar measure for $S$ is $\frac{da ds dt}{a}$, so $S$ is a nonunimodular group. A representation $\pi$ of $S$ on $L^2(\mathbb{R}^2)$ can be defined by
\[ \pi(S_s A_a, t)f(x) = a^{-3/4} f( (S_sA_a)^{-1} (x-t)). \]

The representation $\pi$ is square integrable and irreducible, see
\cite[\S 2]{DahlkeKut08} for the details. We shall write $f_{ast}$ to indicate $\pi(S_sA_a, t) f$. The function $f_{ast}$ is also known as the {\em shearlet transform} of $f$. Since the shearlet transform is realized by an irreducible, square integrable representation of $S$ on $L^2(\mathbb{R}^2)$, the question of linear independence of the left translations of a function in $L^2(S)$ is related to the question of the linear independence of the shearlets of a function in $L^2(\mathbb{R}^2)$. The question of linear independence of the shearlet transforms of $f$ now becomes: Is $f=0$ the only solution in $L^2(\mathbb{R}^2)$ that satisfies 
\begin{equation}\label{eq:independentshearlets}
 \sum_{k=1}^n c_kf_{a_ks_kt_k} = 0 
\end{equation}
where $c_k$ are nonzero constants and $(a_k, s_k, t_k) \in \mathbb{R}^+ \times \mathbb{R} \times \mathbb{R}^2$? 

\begin{Prop}\label{shearletdependency}
Let $S$ be the shearlet group. There exists a nonzero function in $L^2(S)$ that has linearly dependent left translations.
\end{Prop}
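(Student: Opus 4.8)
The plan is to mimic the affine-group construction of Section \ref{affinegroup}: since the shearlet representation $\pi$ is irreducible, unitary and square integrable on $L^2(\mathbb{R}^2)$, it suffices by Proposition \ref{connectingprop} to produce a single nonzero $v \in L^2(\mathbb{R}^2)$ together with a genuine linear dependency among the vectors $\pi(g)v$, $g \in S$. The desired nonzero $F \in L^2(S)$ is then obtained for free by taking $F = F_{v,u} = \langle v, \pi(\cdot)u\rangle$ for any admissible $u$, exactly as in the proof of Proposition \ref{connectingprop}; so the whole task reduces to exhibiting a self-affine refinement of some concrete $v$.

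First I would take $v = \chi_{[0,1)^2}$, the characteristic function of the unit square, which is a nonzero member of $L^2(\mathbb{R}^2)$, and work inside the subgroup of pure dilations and translations (shear $s=0$). The only genuine subtlety is the anisotropic, ``parabolic'' scaling $A_a = \mathrm{diag}(a,\sqrt{a})$: to cut the square into congruent rectangles one must pick $a$ so that both $a$ and $\sqrt{a}$ subdivide $[0,1)$ into an integer number of pieces. Choosing $a = 1/4$ gives $A_{1/4} = \mathrm{diag}(1/4,1/2)$, so that $\pi\!\left(A_{1/4},t\right)\chi_{[0,1)^2}$ equals $2^{3/2}$ times the characteristic function of the rectangle $[t_1, t_1 + \tfrac14) \times [t_2, t_2 + \tfrac12)$. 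The eight translates $t = (i/4,\,j/2)$ with $0 \le i \le 3$ and $0 \le j \le 1$ then tile $[0,1)^2$, which yields the shearlet analogue of \eqref{eq:dependexampleaffine}:
\[
\chi_{[0,1)^2} = 2^{-3/2} \sum_{i=0}^{3} \sum_{j=0}^{1} \pi\!\left(A_{1/4},\left(\tfrac{i}{4},\tfrac{j}{2}\right)\right)\chi_{[0,1)^2}.
\]
Rearranging, and noting that the identity element $(I,0)$ and the eight elements $\left(A_{1/4},(i/4,j/2)\right)$ are pairwise distinct (the first has a different dilation part, the remaining eight have distinct translation parts), I obtain a nontrivial relation $\sum_k c_k \pi(g_k)\chi_{[0,1)^2} = 0$ with all $c_k \ne 0$. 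Feeding this into Proposition \ref{connectingprop} produces the claimed nonzero $F \in L^2(S)$ with $\sum_k c_k L(g_k)F = 0$, completing the argument.

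I do not expect a hard step here: the representation is already known to be irreducible, unitary and square integrable, and Proposition \ref{connectingprop} does all the analytic work, so the proof is really just a matter of getting the self-affine tiling right. The one place to be careful is the bookkeeping forced by the anisotropic dilation, and I would sanity-check it using unitarity of $\pi$: each summand has $L^2$-norm $1$ (since $(2^{3/2})^2 \cdot \tfrac18 = 1$), matching $\|\chi_{[0,1)^2}\|_2 = 1$, which confirms that the refinement relation is correctly balanced.
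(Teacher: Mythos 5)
Your proof is correct, and it reaches the conclusion by a genuinely more elementary route than the paper. Both arguments have the same skeleton --- produce a refinement-type relation among the vectors $\pi(g)v$ for a single $v \in L^2(\mathbb{R}^2)$ and then invoke Proposition \ref{connectingprop} --- but the paper obtains its refinable function abstractly, by citing \cite[Theorem 4.6]{KutyniokSauer09} together with \cite[Example 5]{Grohs13} to get a \emph{continuous}, compactly supported $f$ satisfying $f(x) = \sum_{\beta} a(\beta) f(A_4^{-1}x-\beta)$, whereas you construct an explicit (discontinuous) one by tiling $[0,1)^2$ with the eight rectangles $[\tfrac{i}{4},\tfrac{i+1}{4}) \times [\tfrac{j}{2},\tfrac{j+1}{2})$. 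Your bookkeeping checks out: with $a=1/4$ one has $A_{1/4}^{-1}=A_4=\mathrm{diag}(4,2)$ and $a^{-3/4}=2^{3/2}$, so $\pi(A_{1/4},t)\chi_{[0,1)^2}=2^{3/2}\chi_{[t_1,t_1+1/4)\times[t_2,t_2+1/2)}$, the nine group elements involved are pairwise distinct, and the norm balance $(2^{-3/2})^2\cdot 8 = 1$ confirms the relation; note that your identity is literally the refinement equation $\chi_{[0,1)^2}(x)=\sum_{(i,j)}\chi_{[0,1)^2}(A_4x-(i,j))$ with the constant mask, so you have in effect produced by hand the object the paper imports from the subdivision-scheme literature. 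What the paper's version buys is continuity and a link to the compactly supported shearlet systems discussed in the subsequent Remark; what yours buys is a self-contained, fully explicit example requiring no external machinery. Either suffices for the Proposition as stated.
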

\begin{proof}
The proposition will follow immediately from Proposition \ref{connectingprop} if we can show there exists a nonzero $f \in L^2(\mathbb{R}^2)$ that satisfies \eqref{eq:independentshearlets}, which we now do. Combining \cite[Theorem 4.6]{KutyniokSauer09} and \cite[Example 5]{Grohs13} we see that there exists a continuous nonzero $f \in L^2(\mathbb{R}^2)$ that satisfies
\begin{equation} \label{eq:shearletrefine}
f(x) = \sum_{\beta \in \mathbb{Z}^2} a(\beta) f(A^{-1}_4 x - \beta),
\end{equation}
where $a(\beta) \in \mathbb{C}\mathbb{Z}^2$ and $x \in \mathbb{R}^2$. The function $f$ is said to be {\em refinable}. In the literature $a(\beta)$ is often referred to as a mask. The important thing here is that $a(\beta)$ has finite support. If $\beta = \left( \begin{array}{c} b_1 \\
                                  b_2 \end{array} \right)$, then set $\beta' = \left( \begin{array}{c} 4b_1 \\
                                                                                                                                        2b_2  \end{array} \right)$. Using \eqref{eq:shearletrefine} we obtain
\begin{align*}
\pi(S_0A_1,0)f(x) & = \sum_{\beta \in \mathbb{Z}^2} a(\beta) 4^{3/4} 4^{-3/4} f(A^{-1}_4 x - \beta) \\
                            & = \sum_{\beta \in \mathbb{Z}^2} a(\beta) 4^{3/4} 4^{-3/4} f(A^{-1}_4 x - A^{-1}_4 \beta') \\
                            & = \sum_{\beta \in \mathbb{Z}^2} 4^{3/4} a(\beta) \pi(S_0 A_4, \beta') f(x) \\
                            & = \sum_{\beta \in \mathbb{Z}^2} 4^{3/4} a(\beta) f_{4, 0, \beta'}(x).
\end{align*}
Hence, there is a linear dependency among the shearlet transforms of $f$, proving the proposition.
\end{proof}

\begin{Rem}
The refinable function $f$ used in the proof of the previous proposition has compact support since $a(\beta)$ has finite support \cite[Theorem 5]{Grohs13}. Compare this to \cite[Theorem 4.3]{MaPetersen} where it was shown, in a slightly different setting, that a compactly supported separable shearlet system is linearly independent. Thus it appears that in general the hypothesis of separability is important.
\end{Rem}

The next result gives a sufficient condition for linear independence of a shearlet system. 

\begin{Prop} \label{shearletindep} 
Let $0 \neq f \in L^2(\mathbb{R}^2)$. Then $\{ f_{1nt} \mid n \in \mathbb{Z}, t \in \mathbb{Z}^2 \}$ is a linearly independent set.
\end{Prop}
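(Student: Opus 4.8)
The plan is to realize the index set $\{(n,t)\mid n\in\mathbb{Z},\ t\in\mathbb{Z}^2\}$ as a discrete subgroup of $S$ and then play Proposition \ref{connectingprop} off against Theorem \ref{discretesubgroup}. First I would observe that, since $A_1$ is the identity matrix and $a^{-3/4}=1$ when $a=1$, the shearlet $f_{1nt}$ is precisely $\pi(S_n,t)f$, where $g_{n,t}:=(S_n,t)\in S$. Next I would check that
\[ H := \{ (S_n,t) \mid n\in\mathbb{Z},\ t\in\mathbb{Z}^2 \} \]
is a subgroup of $S$: using the multiplication $(M,t)(M',t')=(MM',t+Mt')$ together with $S_nS_m=S_{n+m}$ and the fact that $S_n$ maps $\mathbb{Z}^2$ into $\mathbb{Z}^2$ (its entries are integers when $n\in\mathbb{Z}$), one verifies closure, and $(S_n,t)^{-1}=(S_{-n},-S_{-n}t)$ shows closure under inverses. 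Clearly $H$ is discrete in $S$.

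The second step is to record the group-theoretic properties of $H$ needed to invoke Theorem \ref{discretesubgroup}. A short computation with the generators $u=(S_1,0)$, $e_1=(I,(1,0)^{T})$, $e_2=(I,(0,1)^{T})$ shows that $e_1$ is central, that $u$ and $e_1$ commute, and that $u e_2 u^{-1}=e_1 e_2$; hence $H$ is isomorphic to the discrete Heisenberg group. In any case $H\cong\mathbb{Z}\ltimes\mathbb{Z}^2$ is torsion-free (a direct check on the powers $(S_n,t)^k=(S_{kn},\sum_{j=0}^{k-1}S_{jn}t)$ shows that no nonidentity element has finite order) and polycyclic, hence torsion-free elementary amenable, so by the discussion following Theorem \ref{discretesubgroup} it satisfies the strong Atiyah conjecture.

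With these in hand I would argue by contradiction. Suppose $\sum_{k=1}^m c_k f_{1 n_k t_k}=0$ with the pairs $(n_k,t_k)$ distinct and all $c_k\neq 0$; equivalently $\sum_k c_k\pi(g_k)f=0$ with $g_k=(S_{n_k},t_k)$ distinct elements of $H$. Since $\pi$ is an irreducible, unitary, square integrable representation of $S$, Proposition \ref{connectingprop} produces a nonzero $F\in L^2(S)$ satisfying $\sum_k c_k L(g_k)F=0$. But $S$ is locally compact and $\sigma$-compact, and $H$ is a torsion-free discrete subgroup satisfying the strong Atiyah conjecture, so Theorem \ref{discretesubgroup} guarantees that the nonzero $F$ has linearly independent $H$-translations — contradicting the nontrivial relation $\sum_k c_k L(g_k)F=0$ among the distinct $g_k\in H$. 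Hence no such relation exists and $\{f_{1nt}\}$ is linearly independent.

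I expect the main obstacle to be the second step: correctly verifying that $H$ is a subgroup and that it is torsion-free and satisfies the strong Atiyah conjecture, since everything downstream rests on fitting $H$ into the hypotheses of Theorem \ref{discretesubgroup}. Once the Heisenberg (or at least torsion-free polycyclic) structure of $H$ is pinned down, the representation-theoretic conclusion is a formal consequence of the two quoted results.
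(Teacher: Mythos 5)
Your proposal is correct and follows essentially the same route as the paper: identify the index set with the torsion-free discrete subgroup $K = \{S_nA_1 \mid n\in\mathbb{Z}\}\ltimes\mathbb{Z}^2$ of $S$, note it satisfies the strong Atiyah conjecture (the paper cites solvability, you cite the equivalent torsion-free polycyclic/elementary amenable route), and then combine Theorem \ref{discretesubgroup} with Proposition \ref{connectingprop}. Your verifications of the subgroup, torsion-freeness, and Heisenberg structure are correct details the paper leaves implicit.
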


\begin{proof}
Let $H = \{ S_nA_1 \mid n \in \mathbb{Z}\}$, then $K = H \ltimes
\mathbb{Z}^2$ is a torsion-free discrete subgroup of $S$. Because $H$
and $\mathbb{Z}^2$ are solvable, $K$ is solvable and thus satisfies
the strong Atiyah conjecture. By Theorem \ref{discretesubgroup} the $K$-left
translations of a function in $L^2(S)$ are linearly independent.
The proposition now follows from Proposition \ref{connectingprop}.
\end{proof}

The results obtained in this section are similar to the results from Section \ref{affinegroup} for the affine group. This is not surprising since the shearlet transform involves a dilation and a translation.

\section{Virtually abelian groups}

In this section we consider virtually abelian groups, that is
groups with an abelian subgroup of finite index.

\begin{Prop} \label{Pabelianbyfinite}
Let $G$ be a locally compact group which has an abelian
closed subgroup $A$ of finite index, and let $1 \le p \in
\mathbb{R}$.  Assume that if $0 \ne \phi \in \mathbb{C}A$ and $0 \ne
f \in L^p(A)$, then $\phi f \ne 0$.  Let $0 \ne f \in L^p(G)$, let $H
\leqslant G$ and let $\theta \in \mathbb{C}H$.
\begin{enumerate}[\normalfont(a)]
\item
If $\theta$ is a nonzero divisor in $\mathbb{C}H$, then $\theta f \ne
0$.

\item
If $H$ is torsion free and $\theta \ne 0$, then $\theta f \ne 0$.
\end{enumerate}
\end{Prop}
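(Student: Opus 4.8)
The plan is to use the finite-index abelian subgroup $A$ to trade the noncommutative ring $\mathbb{C}H$ for the \emph{commutative} ring $\mathbb{C}A$ via an induced-representation (matrix) picture, and then to bridge from an algebraic nonzerodivisor statement to the analytic statement on $L^p(A)$ by a determinant--adjugate argument. First I would set up the matrix model. Since $A$ is closed of finite index it is open, so the restriction of Haar measure to $A$ is Haar measure on $A$ and left translations are isometries of $L^p$. Write $m=[G:A]$ and fix left coset representatives $s_1,\dots,s_m$, so $G=\bigsqcup_i s_iA$ and $L^p(G)\cong L^p(A)^m$ isometrically via $F\mapsto(\tilde F_1,\dots,\tilde F_m)$ with $\tilde F_i(a)=F(s_ia)$. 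Using $g s_j=s_i\,(s_i^{-1}gs_j)$ one checks that left translation by $g\in G$ acts on $L^p(A)^m$ through the matrix $\rho(g)\in M_m(\mathbb{C}A)$ whose $(i,j)$-entry is the group element $s_i^{-1}gs_j\in A$ when $gs_jA=s_iA$ and $0$ otherwise. Extended linearly, $\rho\colon\mathbb{C}G\to M_m(\mathbb{C}A)$ is a ring homomorphism, and the action of any $\theta\in\mathbb{C}G$ on $L^p(G)$ is exactly that of $\rho(\theta)$ on $L^p(A)^m$, where $\mathbb{C}A$ acts on $L^p(A)$ by convolution. The same formulas with $L^p(A)$ replaced by $\mathbb{C}A$ identify $\mathbb{C}G\cong\mathbb{C}A^m$ as a left $\mathbb{C}G$-module, so that $\rho(\theta)$ on $\mathbb{C}A^m$ is simply left multiplication by $\theta$ on $\mathbb{C}G$.

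For (a), decomposing $G$ into right cosets of $H$ exhibits $\mathbb{C}G$ as a free left $\mathbb{C}H$-module on which left multiplication by $\theta$ acts diagonally as a direct sum of copies of left multiplication by $\theta$ on $\mathbb{C}H$. Hence a nonzerodivisor $\theta\in\mathbb{C}H$ is a left nonzerodivisor in $\mathbb{C}G$, i.e.\ $\rho(\theta)$ is injective on $\mathbb{C}A^m$. Because $A$ is abelian, $\mathbb{C}A$ is commutative, and injectivity of $\rho(\theta)$ on the free module $\mathbb{C}A^m$ forces $\det\rho(\theta)$ to be a nonzerodivisor in $\mathbb{C}A$ (McCoy's theorem); in particular $\det\rho(\theta)\neq0$. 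Now the standing hypothesis applies to the nonzero element $\det\rho(\theta)\in\mathbb{C}A$: it acts injectively on $L^p(A)$. Finally the adjugate identity $\mathrm{adj}(\rho(\theta))\,\rho(\theta)=\det\rho(\theta)\cdot I_m$, valid over the commutative ring $\mathbb{C}A$ and preserved by the algebra homomorphism $M_m(\mathbb{C}A)\to\mathcal{B}(L^p(A)^m)$, shows that $\rho(\theta)v=0$ implies $\det\rho(\theta)\,v_i=0$ and hence $v_i=0$ for each $i$. Thus $\rho(\theta)$ is injective on $L^p(A)^m=L^p(G)$, which is precisely the assertion $\theta f\neq0$.

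For (b), I would deduce the statement from (a) by showing that a nonzero $\theta$ is automatically a nonzerodivisor in $\mathbb{C}H$ when $H$ is torsion free. Since $A\cap H$ is abelian of finite index in $H$, the group $H$ is torsion free and virtually abelian. Any relation $\theta\beta=0$ with $\beta\neq0$ involves only finitely many group elements and so lives in $\mathbb{C}H_0$ for a finitely generated subgroup $H_0\leqslant H$; such an $H_0$ is torsion free and virtually abelian, hence torsion free polycyclic and in particular torsion free elementary amenable, so $\mathbb{C}H_0$ has no zero divisors by \cite[Theorem 2]{Linnell91}. This contradiction shows $\theta$ is a nonzerodivisor in $\mathbb{C}H$, and part (a) completes the proof.

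The conceptual crux, and the step most in need of care, is the passage from the purely algebraic injectivity of $\rho(\theta)$ on $\mathbb{C}A^m$ to the analytic injectivity on $L^p(A)^m$: commutativity of $\mathbb{C}A$ is exactly what allows McCoy's theorem to produce a single nonzero scalar $\det\rho(\theta)$ and the adjugate identity to reduce the full matrix to that scalar, thereby matching the one-variable hypothesis available on $L^p(A)$. Verifying that $\rho$ is a ring homomorphism intertwining the two actions, and that the coset identifications are isometric (which is where openness of $A$ enters), is routine but must be carried out carefully.
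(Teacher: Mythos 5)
Your argument is correct, but it takes a genuinely different route from the paper's. The paper first passes to the normal core $B$ of $A$, inverts $S=\mathbb{C}B\setminus\{0\}$ to form the Ore localization $S^{-1}\mathbb{C}G$ --- a semisimple artinian ring by Maschke's theorem, so non-zerodivisors become invertible --- and simultaneously forms the module of fractions $S^{-1}L^p(G)\supseteq L^p(G)$; invertibility of $\theta$ there gives $\theta f\neq 0$ immediately. You instead keep $A$ itself (no normality needed), embed $\mathbb{C}G$ into $M_m(\mathbb{C}A)$ via the induced-representation matrix $\rho$, and use McCoy's theorem to convert injectivity of $\rho(\theta)$ on $\mathbb{C}A^m$ into the single scalar statement that $\det\rho(\theta)$ is a nonzero element of $\mathbb{C}A$, after which the adjugate identity transports injectivity to $L^p(A)^m\cong L^p(G)$. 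Your route avoids the Ore machinery and the slightly delicate point that $L^p(G)$ embeds in $S^{-1}L^p(G)$ (i.e.\ is $S$-torsion-free), and it makes explicit, via the right-coset decomposition of $\mathbb{C}G$ over $\mathbb{C}H$, the step ``non-zerodivisor in $\mathbb{C}H$ implies non-zerodivisor in $\mathbb{C}G$'' that the paper asserts without comment; the paper's route is shorter once localization is granted and does not depend on having a determinant, so it would adapt to non-abelian Ore situations. Two small remarks. First, since the standing hypothesis already forces $\mathbb{C}A$ to be an integral domain (as the paper notes at the outset), you could replace McCoy's theorem by elementary linear algebra over the fraction field of $\mathbb{C}A$. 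Second, in (b) your intermediate claim that a finitely generated torsion-free virtually abelian group is polycyclic is false in general (Bieberbach groups with non-solvable holonomy are torsion-free, finitely generated and virtually $\mathbb{Z}^n$ but not solvable); this is harmless, since such groups are elementary amenable, which is all that your appeal to \cite[Theorem 2]{Linnell91} needs, and indeed the finitely generated reduction is unnecessary --- $H$ itself is torsion-free and abelian-by-finite, which is exactly the setting of K.~A.~Brown's theorem \cite{Brown76} that the paper cites.
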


\begin{proof}
Note that $\mathbb{C}A$ is an integral domain.
Let $B$ be the intersection of the
conjugates of $A$ in $G$, then $B$ is a closed abelian normal subgroup of finite
index in $G$.  Let $\{a_1, \dots, a_m\}$ be a set of coset
representatives for $B$ in $A$.  Then $L^p(A) = \bigoplus_{i=1}^m
L^p(B)a_i$ and we see that if $0 \ne \phi \in \mathbb{C}B$ and $0 \ne
f \in L^p(B)$, then $\phi f \ne 0$.
Let $\{g_1,
\dots, g_n\}$ be a set of coset representatives for $B$ in $G$.  Then
$L^p(G) = \bigoplus_{i=1}^n L^p(B)g_i$.  We may view this as an
isomorphism of $\mathbb{C}B$-modules.  Set $S = \mathbb{C}B
\setminus\{0\}$.  Then we may form the ring
of fractions $S^{-1} \mathbb{C}G$.
Since every element of $S$ is a non-zerodivisor in $\mathbb{C}G$, it
follows that $S^{-1}\mathbb{C}G$ is a ring containing $\mathbb{C}G$.
Furthermore $S^{-1}\mathbb{C}B$ is a field, and $S^{-1}\mathbb{C}G$
has dimension $n$ over this field.  Therefore $S^{-1}\mathbb{C}G$ is
an artinian ring, and since $S^{-1}\mathbb{C}B$ is a field of
characteristic zero, we see that $S^{-1}\mathbb{C}G$
is a semisimple artinian ring, by Maschke's theorem.  We deduce
that non-zerodivisors in $S^{-1}\mathbb{C}G$ are invertible.
Using \cite[Theorem 10.8]{GoodearlWarfield04}, we may form the
$S^{-1}\mathbb{C}G$-module $S^{-1}L^p(G)$.

\begin{enumerate}[\normalfont(a)]
\item
If $\theta$ is non-zerodivisor in $\mathbb{C}H$,
then $\theta$ is a non-zerodivisor in $\mathbb{C}G$ and hence is
invertible in $S^{-1}\mathbb{C}G$, so $\theta^{-1}$ exists.  We may
regard $f$ as an element of $S^{-1}L^p(G)$, because
$S^{-1}L^p(G)$ contains $L^p(G)$.  So if $\theta f = 0$, then
$\theta^{-1}\theta f = 0$, consequently $f = 0$ and we have a
contradiction.

\item
If $H$ is torsion free, then we know that every non-zero element
of $\mathbb{C}H$ is a non-zerodivisor in $\mathbb{C}H$;
this was first proved by K.~A.~Brown \cite{Brown76}.
Thus the result follows from (a).\qedhere
\end{enumerate}
\end{proof}

We now use the previous result to give the following generalization of \cite[Theorem 1.2]{EdgarRosenblatt79}.

\begin{Thm}\label{EdRosengen}
Let $G$ be a locally compact group with no nontrivial compact
subgroups, and suppose $G$ has an abelian closed
subgroup of finite index.  Then every nonzero element of
$L^p(G)$, where $1 \leq p \leq 2$, has linearly independent translations.
\end{Thm}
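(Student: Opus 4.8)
The plan is to obtain the theorem as a direct application of Proposition \ref{Pabelianbyfinite}(b), so the work reduces to checking the two inputs that proposition needs for the group at hand. First I would record that the hypothesis ``no nontrivial compact subgroups'' forces $G$ to be torsion free: if some $g \in G$ had finite order $m > 1$, then $\langle g \rangle$ would be a nontrivial finite subgroup, and finite sets are compact, contradicting the assumption. The same observation shows that the closed abelian finite-index subgroup $A$ inherits the property of having no nontrivial compact subgroups, since every compact subgroup of $A$ is a compact subgroup of $G$.

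Next I would verify the standing hypothesis of Proposition \ref{Pabelianbyfinite}, namely that $\phi f \ne 0$ whenever $0 \ne \phi \in \mathbb{C}A$ and $0 \ne f \in L^p(A)$, for $1 \le p \le 2$. As in Proposition \ref{connecttozerodiv}, the assertion that $f$ has linearly independent translations in $L^p(A)$ is equivalent to the assertion that $\phi f \ne 0$ for every nonzero $\phi \in \mathbb{C}A$, because $\sum_k c_k L(a_k) f = \phi f$ for $\phi = \sum_k c_k L(a_k)$ and distinct left translations are linearly independent as operators. Since $A$ is a locally compact \emph{abelian} group with no nontrivial compact subgroups and $1 \le p \le 2$, this injectivity is exactly the content of \cite[Theorem 1.2]{EdgarRosenblatt79}, the abelian case we are generalizing, and so the hypothesis of Proposition \ref{Pabelianbyfinite} holds for $A$.

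With both inputs secured, I would finish by applying Proposition \ref{Pabelianbyfinite}(b) with $H = G$. Because $G$ is torsion free and the hypothesis of the proposition holds for its abelian subgroup $A$, part (b) yields $\theta f \ne 0$ for every nonzero $\theta \in \mathbb{C}G$ and every nonzero $f \in L^p(G)$. Reversing the equivalence of the previous paragraph then says precisely that each nonzero $f \in L^p(G)$ has linearly independent translations, which is the desired conclusion.

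The main obstacle is not in this deduction, which is bookkeeping once Proposition \ref{Pabelianbyfinite} is in hand, but in the abelian base case that feeds its hypothesis: everything rests on nonzero elements of $\mathbb{C}A$ acting injectively on $L^p(A)$. This is the step where the restriction $1 \le p \le 2$ is genuinely used, since the abelian argument is Fourier-analytic (passing to $\widehat{A}$, where connectedness of $\widehat{A}$ is equivalent to $A$ having no nontrivial compact subgroups and forces the zero set of a nonzero trigonometric polynomial to be null) and relies on the Hausdorff--Young estimate, which is available only for $p \le 2$. Once that abelian fact is granted, the ring-theoretic transport in Proposition \ref{Pabelianbyfinite} --- forming $S^{-1}\mathbb{C}G$, invoking Maschke's theorem, and using Brown's theorem \cite{Brown76} that $\mathbb{C}H$ has no zerodivisors for torsion-free $H$ in this virtually abelian setting --- carries it from $A$ to $G$.
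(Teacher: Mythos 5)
Your proposal is correct and follows essentially the same route as the paper: deduce torsion-freeness of $G$ from the absence of nontrivial compact subgroups, feed \cite[Theorem 1.2]{EdgarRosenblatt79} into the hypothesis of Proposition \ref{Pabelianbyfinite} on the abelian subgroup $A$ (which, as you note, inherits the no-nontrivial-compact-subgroups property), and then apply part (b) with $H = G$. The extra details you supply --- the equivalence between linear independence of translations and injectivity of the $\mathbb{C}G$-action, and the remark on where $p \le 2$ enters --- are correct elaborations of steps the paper leaves implicit.
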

\begin{proof}
Since $G$ has no nontrivial compact subgroups, it is torsion free.
Furthermore for $1 \leq p \leq 2$, if $0 \ne \phi \in \mathbb{C}A$
and $0 \ne f \in L^p(A)$,
then $\phi f \ne 0$ by \cite[Theorem 1.2]{EdgarRosenblatt79}.
The result now follows from Proposition \ref{Pabelianbyfinite}(b).
\end{proof}

\begin{Thm}
Let $G$ be a locally compact abelian group, let $n \in \mathbb{N}$,
and let $1\le p \in \mathbb{R}$.  Assume that $p \le 2n/(n-1)$.
Suppose $G$ has a closed subgroup of finite index
isomorphic to $\mathbb{R}^n$ or $\mathbb{Z}^n$ as a locally compact
abelian group.  Let $H\leqslant G$, let $\theta \in \mathbb{C}H$, and let
$\theta \in \mathbb{C}G$ and let
$0 \ne f \in L^p(G)$.
\end{Thm}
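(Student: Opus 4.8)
The plan is to deduce the theorem from Proposition \ref{Pabelianbyfinite}, reducing everything to a base case on the model groups $\mathbb{R}^n$ and $\mathbb{Z}^n$. The hypothesized finite-index subgroup $A \cong \mathbb{R}^n$ or $\mathbb{Z}^n$ is abelian and, by assumption, closed of finite index, so it is exactly the subgroup $A$ required by Proposition \ref{Pabelianbyfinite}. Thus it suffices to verify the standing hypothesis of that proposition, namely that $\phi \ast g \ne 0$ whenever $0 \ne \phi \in \mathbb{C}A$ and $0 \ne g \in L^p(A)$. Granting this, Proposition \ref{Pabelianbyfinite} yields at once that $\theta f \ne 0$ for every nonzero $f \in L^p(G)$ whenever $\theta \in \mathbb{C}H$ is a non-zerodivisor, and in particular, via the theorem of K.~A.~Brown used in Proposition \ref{Pabelianbyfinite}(b), whenever $H$ is torsion free and $\theta \ne 0$. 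Hence the entire content of the theorem is the base-case injectivity of convolution by a nonzero element of $\mathbb{C}A$ on $L^p(A)$, in the extended range $1 \le p \le 2n/(n-1)$.

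To treat the base case I would pass to the Fourier side. Identifying $\mathbb{C}A$ with the finitely supported difference operators, the relation $\phi \ast g = 0$ transforms into $\widehat{\phi}\,\widehat{g} = 0$, where $\widehat{\phi}$ is a nonzero exponential polynomial on $\widehat{\mathbb{R}^n} = \mathbb{R}^n$ (respectively a nonzero trigonometric polynomial on $\widehat{\mathbb{Z}^n} = \mathbb{T}^n$). Since $\widehat{\phi}$ is real analytic and not identically zero, its zero set $Z$ is a real-analytic hypersurface of Lebesgue measure zero, and the tempered distribution $\widehat{g}$ must be supported on $Z$. For $1 \le p \le 2$ this already forces $g = 0$, because Hausdorff--Young places $\widehat{g}$ in $L^{p'}$, a genuine function, which must then vanish almost everywhere off the null set $Z$. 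The new content lies entirely in the range $2 < p \le 2n/(n-1)$, where $\widehat{g}$ is only a distribution and the preceding argument breaks down.

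The exponent $2n/(n-1)$ is exactly the threshold I would exploit. On a piece of $Z$ with nonvanishing Gaussian curvature the surface measure $\sigma$ obeys the stationary-phase decay $|\widehat{\sigma}(x)| \le C(1+|x|)^{-(n-1)/2}$, so $\widehat{\sigma} \in L^p$ precisely when $(n-1)p/2 > n$, i.e.\ when $p > 2n/(n-1)$. Thus in our range a nonzero measure with smooth density on a curved piece of $Z$ cannot be the Fourier transform of an $L^p$ function. Flat pieces of $Z$ (pieces of affine hyperplanes) are even more favorable: there $g$ is, up to a unimodular factor, constant in the transverse direction, and so fails to lie in $L^p$ for every finite $p$ unless it is zero. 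The same dichotomy holds verbatim on the torus in the $\mathbb{Z}^n$ case, the Fourier coefficients of surface measure decaying at the identical rate.

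The hard part will be turning these local heuristics into a clean global conclusion $g = 0$: one must stratify the real-analytic variety $Z$ into its smooth curved and flat pieces together with a lower-dimensional singular locus, control the transverse order of the distribution $\widehat{g}$ along each stratum, and patch the local nonintegrability statements together. This is precisely the content of the sharp difference-equation theorem for the model groups, so rather than redo this harmonic analysis I would invoke the corresponding sharp case of \cite[Theorem 1.2]{EdgarRosenblatt79} for $\mathbb{R}^n$ and $\mathbb{Z}^n$ in the full range $1 \le p \le 2n/(n-1)$. With the base case established for $A$, feeding it into Proposition \ref{Pabelianbyfinite}(a) and (b) completes the proof.
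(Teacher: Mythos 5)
Your proposal is correct and follows essentially the same route as the paper: both reduce the theorem to Proposition \ref{Pabelianbyfinite} applied with $A \cong \mathbb{R}^n$ or $\mathbb{Z}^n$ and then quote, as a black box, the sharp base-case fact that convolution by a nonzero element of $\mathbb{C}A$ is injective on $L^p(A)$ for $1 \le p \le 2n/(n-1)$. The only discrepancy is bibliographic: the paper attributes the $\mathbb{R}^n$ case to \cite[Theorem 3]{Rosenblatt08} and the $\mathbb{Z}^n$ case to \cite[Theorem 2.1]{LinnellPuls01}, whereas \cite[Theorem 1.2]{EdgarRosenblatt79}, as used elsewhere in the paper, covers only the range $1 \le p \le 2$ and does not contain the $\mathbb{Z}^n$ result.
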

\begin{enumerate}[\normalfont(a)]
\item
If $\theta$ is a nonzero divisor in $\mathbb{C}H$, then $\theta f \ne
0$.
\item
If $H$ is torsion free and $\theta \ne 0$, then $\theta f \ne 0$.
\end{enumerate}

\begin{proof}
We apply Proposition \ref{Pabelianbyfinite} with $A = \mathbb{R}^n$
or $\mathbb{Z}^n$.  We need to check the hypothesis
that if $0 \ne \phi \in \mathbb{C}A$ and $0 \ne
f \in L^p(A)$, then $\phi f \ne 0$.  For the case $A = \mathbb{R}^n$,
this follows from \cite[Theorem 3]{Rosenblatt08}, while for the case
$A = \mathbb{Z}^n$, this follows from \cite[Theorem
2.1]{LinnellPuls01}.
\end{proof}
{\bf Acknowledgments.}  The research of the second author was partially supported by PSC-CUNY grant 66269-00 44.

\bibliographystyle{plain}
\bibliography{dependtranssqintrep}
\end{document}